\def\int{\displaystyle\!int}
\def\lim{\displaystyle\!lim}
\def\sum{\displaystyle\!sum}
\def\sup{\displaystyle\!sup}
\def\inf{\displaystyle\!inf}
\def\cap{\displaystyle\!cap}
\def\max{\displaystyle\!max}
\def\min{\displaystyle\!min}
\def\frac{\displaystyle\!frac}
\newtheorem{theorem}{\bf Theorem}[section]
\newtheorem{lemma}{\bf Lemma}[section]
\newtheorem{proposition}{\bf Proposition}[section]
\newtheorem{remark}{\bf Remark}[section]
\def\pd#1#2{\frac{\partial#1}{\partial#2}}
\begin{document}

\title{
 The Impact of Time Delay and Angiogenesis in a Tumor Model}

\author{Zejia Wang, Haihua Zhou, Huijuan Song\thanks
{Corresponding author. E-mail: {\it songhj@jxnu.edu.cn}.} \quad
\\
{\normalsize School of Mathematics and Statistics,}
\\
{\normalsize Jiangxi Normal  University, Nanchang
330022, P. R. China}
}

\date{}

\maketitle

\begin{abstract}
We consider a free boundary tumor model under the presence of angiogenesis and time delays in the process of proliferation, in which the cell location is incorporated. It is assumed that the tumor attracts blood vessels at a rate proportional to $\alpha$, and a parameter $\mu$ is proportional to the `aggressiveness' of the tumor. In this paper, we first prove that there exists a unique radially symmetric stationary solution $\left(\sigma_{*}, p_{*}, R_{*}\right)$ for all positive $\alpha$, $\mu$. Then a threshold value $\mu_\ast$ is found such that the radially symmetric stationary solution is linearly stable if $\mu<\mu_\ast$ and linearly unstable if $\mu>\mu_\ast$. Our results indicate that the increase of the angiogenesis parameter $\alpha$ would result in the reduction of the threshold value $\mu_\ast$; adding the time delay would not alter the threshold value $\mu_\ast$, but result in a larger stationary tumor, and the larger the tumor aggressiveness parameter $\mu$ is, the greater impact of time delay would have on the size of the stationary tumor.
\\
\\
{Keywords:} Tumor model; Free boundary problem; Time delay; Angiogenesis; Stability
\\
{2020 Mathematics Subject Classification:} 35R35, 35K57, 35B35
\end{abstract}

\section{Introduction}

The first reaction-diffusion mathematical model of tumor growth in the form of a free boundary problem of a system of partial differential equations was proposed in 1972 by
Greenspan \cite{Gr-72,Gr-76}. Since then, a variety of mathematical models have been established from different aspects to describe tumor growth process; see \cite{By-97,BC-95,BC-96,Cui-02,CX-07,FL-15,HZH-19(2),Xu-20,XZB-15,XF-11,ZH-1,ZEC-08,ZC-09,ZC-18}, the reviewing articles \cite{AM,F-07,Low} and references therein. Rigorous mathematical analysis of such free boundary problems was initiated by Friedman and Reitich \cite{FR-99} in 1999, and has made great progress over the past two decades; see \cite{Cui-02,CX-07,CZ-18,FB-03,FH-06(2),FH-06,FH-08,Xu-20,XZB-15,FF-03,FR-01,FL-15,HZH-17,HZH-19(1),HZH-19(2),
HZH-21,XF-11,Xu-10,ZH-1,ZH-2,ZEC-08,ZC-09,ZC-18,ZC-19} and references therein. All these studies may not only provide a better and deeper understanding of the dynamics of tumor growth, but also assist in the treatment of cancer.

In this paper, we consider a mathematical model for tumor growth with time delays and angiogenesis:
\begin{align}
&c\sigma_t-\Delta\sigma+\sigma =0, && x\in\Omega(t),~ t>0,  \label{m-10}
\\
&-\Delta p=\mu[\sigma(\xi(t-\tau;x,t),t-\tau)-\tilde{\sigma}], && x\in\Omega(t), ~t>0,  \label{m-11}
\\
& \left\{
    \begin{aligned}
&  \frac{d \xi}{d s }=-\nabla p(\xi,s), && t-\tau \leq s \leq t, \\
&  \xi=x, && s=t,
    \end{aligned}\label{m-12}
    \right.
\\
&\frac{\partial \sigma}{\partial\vec{n}}+\alpha(\sigma-\bar{\sigma})=0, && x\in\partial\Omega(t), ~t>0,  \label{m-13}
\\
&p=\kappa, && x\in\partial\Omega(t),~ t>0,  \label{m-14}
\\
&V_n=-\pd p{\vec{n}}, && x\in\partial\Omega(t),~ t>0,  \label{m-15}
\\
&\Omega(t)=\Omega_0, && -\tau \leq t \leq 0, \label{m-16}
\\
&\sigma(x,t)=\sigma_0(x), && x\in \Omega_0, ~-\tau \leq t \leq 0, \label{m-17}
\end{align}
where $\Omega(t)\subset\mathbb{R}^2$ denotes the tumor domain at time $t$, $\sigma$ and $p$ are the nutrient concentration and the pressure within the tumor respectively, $c=T_{\rm diffusion}$
$/T_{\rm growth}$ is the ratio of the nutrient diffusion time scale to the tumor growth (e.g., tumor doubling) time scale; thus it is very small and can sometimes be set to be 0 (quasi-steady state approximation). The parameter $\mu$ represents the ``aggressiveness'' of the tumor, $\tilde{\sigma}$ is a threshold concentration for proliferating, $\vec{n}$ is the outward normal, $\alpha$ is a positive constant reflecting the rate at which the tumor attracts blood vessels, $\bar{\sigma}$ is the nutrient concentration outside the tumor, $\kappa$ is the mean curvature and $V_n$ is the velocity of the free boundary in the direction $\vec{n}$. The time delay $\tau$ in \eqref{m-11} represents the time taken for cells to undergo replication (approximately 24 hours). The boundary conditions (\ref{m-13}) and (\ref{m-14}) describe nutrient supply and cell-to-cell adhesiveness at the boundary respectively. \eqref{m-16} and \eqref{m-17} are initial conditions, implying that the initial data are time independent on
$[-\tau,0]$.

The function $\xi(s ; x, t)$ represents the cell location at time $s$ as cells are moving with the velocity field $\vec{V}$, and satisfies
\begin{eqnarray}\label{m-3}
& \left\{
    \begin{aligned}
&\frac{d \xi}{d s }=\vec{V}(\xi, s), \quad t-\tau \leq s \leq t, \\
&\xi|_{s=t}=x.
    \end{aligned}
    \right.
\end{eqnarray}
In other words, $\xi$ tracks the path of the cell currently located at $x$. Hence, if the tumor is assumed to have the structure of porous medium where Darcy's law $\vec{V}=-\nabla p$ holds, then \eqref{m-3} reduces to (\ref{m-12}). For more details we refer to \cite{ZH-1,ZH-2}.

The model \eqref{m-10}--\eqref{m-17} without time delays was studied in \cite{FL-15,HZH-17,HZH-19(1),HZH-21}, where the existence and asymptotic stability of stationary solutions are established; see  \cite{CZ-18,ZC-18,ZC-19} for the general case when the nutrient consumption rate and the cell proliferation rate are not linear. However, in reality, every process, whether it is long or short, would consume time. Thus, compared with those where $\tau=0$, models with time delays are more accurate and consistent with real life. It was Byrne \cite{By-97} who proposed the first free-boundary spherically symmetric tumor model with time delays and the Dirichlet boundary condition
\begin{equation}
\label{m-13-D}
\sigma=\bar\sigma,
\end{equation}
and performed numerical simulations. Rigorous mathematical theoretical analysis of such kinds of spherically symmetric tumor models was made in \cite{FB-03,CX-07,Xu-10,XF-11,XZB-15} and the references therein. Recently, Zhao and Hu \cite{ZH-1} established the non-radially symmetric model \eqref{m-10}--\eqref{m-17} where instead of \eqref{m-13}, \eqref{m-13-D} is prescribed on the tumor boundary. Assuming $c=0$, they investigated the linear stability of the radially symmetric stationary solution under non-radial perturbations and the impact of time delays in the model; bifurcation analysis was done in \cite{ZH-2}.
The radially symmetric version of \eqref{m-10}--\eqref{m-17} was also recently studied by Xu in \cite{Xu-20}. It was proved that the stationary solution is always stable with respect to all radially symmetric perturbations in the case $c=0$.

Motivated by the works mentioned above, in the present paper, we study the non-radially symmetric model \eqref{m-10}--\eqref{m-17}. Besides time delays in the process of proliferation, we work with the boundary
condition \eqref{m-13} stemming from angiogenesis, which is more reasonable compared with \eqref{m-13-D}
from the point of view of biology. In fact, as explained in \cite{FL-15,HZH-21}, the positive constant $\alpha$ in \eqref{m-13} reflects the strength of the blood vessel system of the tumor; the smaller $\alpha$ is, the weaker the blood vessel system of the tumor will be;
$\alpha=0$ means that the tumor does not have its own blood vessel system and $\alpha=\infty$ indicates that the tumor is all surrounded by the blood vessels which reduces to the Dirichlet boundary condition \eqref{m-13-D}.

Our main aim is to discuss linear stability for the model \eqref{m-10}--\eqref{m-17} in the quasi-steady state case $c=0$, and reveal how a tumor's growth dynamics are affected by time delays and angiogenesis. The first main result concerns the existence of radially symmetric stationary solutions.

\begin{theorem}
\label{mainth-t1}
For the time delay $\tau$ small enough, the problem (\ref{m-10})--(\ref{m-17}) admits a unique classical radially symmetric stationary solution $(\sigma_\ast, p_\ast, R_\ast)$.
\end{theorem}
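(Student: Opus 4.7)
The plan is to decouple the nutrient equation from the pressure equation, reduce the stationary radial problem to a single scalar equation for the free radius $R$, and then perturb around the well-understood no-delay case $\tau=0$ via the implicit function theorem.

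First, the stationary radial nutrient equation,
\begin{equation*}
\sigma''+\frac{1}{r}\sigma'-\sigma=0,\quad 0<r<R,\qquad \sigma'(0)=0,\quad \sigma'(R)+\alpha(\sigma(R)-\bar\sigma)=0,
\end{equation*}
is a linear ODE which is independent of $p$ and of $\tau$, and for every $R>0$ it admits the unique smooth solution
\begin{equation*}
\sigma_R(r)=\frac{\alpha\bar\sigma\,I_0(r)}{\alpha I_0(R)+I_1(R)},
\end{equation*}
where $I_0,I_1$ are the modified Bessel functions, so the nutrient profile is fully and smoothly determined by $R$ alone.

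Next, in the radial stationary regime the characteristic system \eqref{m-12} collapses, after reversing time via $u=t-s$, to the autonomous ODE $\dot{\tilde\rho}(u)=p'(\tilde\rho(u))$, $\tilde\rho(0)=r$, so that $\tilde\rho(\tau;r;p)$ depends nonlinearly on $p$ through its flow. Integrating the Poisson equation \eqref{m-11} once in radial form and enforcing $V_n=-p'(R)=0$ from \eqref{m-15}, together with $p(R)=\kappa=1/R$ from \eqref{m-14}, produces the scalar compatibility condition
\begin{equation*}
F(R,\tau):=\int_0^R s\bigl(\sigma_R(\tilde\rho(\tau;s;p_R))-\tilde\sigma\bigr)\,ds = 0.
\end{equation*}
For each $R$ in a neighborhood of the no-delay stationary radius and for $\tau$ small, I would construct $p_R$ by a contraction argument in $C^1([0,R])$: given a candidate $q$, solve the characteristic ODE to obtain $\tilde\rho(\tau;\cdot;q)$, then solve the linear Poisson equation with right-hand side $\mu(\sigma_R(\tilde\rho(\tau;\cdot;q))-\tilde\sigma)$ and boundary datum $1/R$ to define $T[q]$, and prove an estimate of the form $\|T[q_1]-T[q_2]\|_{C^1}\le C\tau\|q_1-q_2\|_{C^1}$. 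The resulting unique fixed point $p_R$ depends smoothly on $(R,\tau)$.

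With $p_R$ in hand, $F(R,\tau)$ is a smooth scalar function. At $\tau=0$ one has $\tilde\rho(0;s;\cdot)=s$, so $F(R,0)=\int_0^R s(\sigma_R(s)-\tilde\sigma)\,ds$; this is the classical no-delay equation, shown in \cite{FL-15,HZH-21} to admit a unique positive root $R_0$ at which $\partial_R F(R_0,0)\neq 0$. The implicit function theorem then produces, for all sufficiently small $\tau$, a unique $R_\ast=R_\ast(\tau)$ near $R_0$ satisfying $F(R_\ast(\tau),\tau)=0$, and the triple $(\sigma_\ast,p_\ast,R_\ast)=(\sigma_{R_\ast},p_{R_\ast},R_\ast)$ is the desired classical stationary solution. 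The main technical obstacle is the contraction step for $p_R$: one must obtain $C^1$ control of the characteristic flow $\tilde\rho$ with respect to $q$, transfer it to $C^1$ control of the solution of a Poisson problem whose domain and boundary datum vary with $R$, and keep all bounds uniform for $R$ in a compact neighborhood of $R_0$. It is precisely the factor $\tau$ gained by integrating the characteristic ODE that produces the contraction constant, which is the essential reason for the restriction ``$\tau$ small enough''.
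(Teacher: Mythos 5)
Your proposal is correct and follows essentially the same route as the paper: the nutrient profile is solved explicitly via $I_0$, the pressure and the characteristic flow are constructed for small $\tau$ by a contraction argument (the step the paper delegates to \cite{ZH-1}), and the problem reduces to the scalar equation $F(R,\tau)=0$, whose $\tau=0$ version is exactly \eqref{eq3.4}. The only difference worth noting is that the paper concludes by checking that $F(\cdot,0)$ is strictly decreasing with a unique positive zero, which also excludes roots far from $R_0$ when $\tau$ is small, whereas your implicit function theorem step alone yields uniqueness only near $R_0$, so for the full uniqueness assertion you should keep the monotonicity (or a uniform-in-$R$ closeness estimate for $F(\cdot,\tau)-F(\cdot,0)$).
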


Next, in order to tackle the linear stability, we assume that the initial conditions are perturbed as follows:
\begin{equation}\label{m-18}
\partial \Omega(t):~ r=R_\ast+\varepsilon \rho_{0}(\theta), \quad \sigma(r, \theta, t)=\sigma_\ast(r)+\varepsilon w_{0}(r, \theta), \quad-\tau \leq t \leq 0.
\end{equation}
Substituting
\begin{align}
&\partial \Omega(t):~ r=R_\ast+\varepsilon \rho(\theta, t)+O(\varepsilon^{2}),
\label{m-19-1}\\
&\sigma(r, \theta, t)=\sigma_\ast(r)+\varepsilon w(r, \theta, t)+O(\varepsilon^{2}),
\label{m-19-2}\\
&p(r, \theta, t)=p_\ast(r)+\varepsilon q(r, \theta, t)+O(\varepsilon^{2}),
\label{m-19-3}
\end{align}
into (\ref{m-10})--(\ref{m-17}) and collecting the $\varepsilon$-order terms, we obtain the linearization of (\ref{m-10})--(\ref{m-17}) about the radially symmetric stationary solution $(\sigma_\ast, p_\ast, R_\ast)$.
The second main result of this paper is the following theorem:

\begin{theorem}
\label{mainth-t2}
There exists a threshold value $\mu_\ast>0$ such that if $\mu<\mu_\ast$, then the radially symmetric stationary solution $(\sigma_\ast, p_\ast, R_\ast)$ of \eqref{m-10}--\eqref{m-17} is linearly stable in the sense that
\begin{equation}\label{m-20}
|\rho(\theta, t)-(a_{1} \cos\theta+b_{1} \sin\theta)|\leq Ce^{-\delta t},\quad t>t_0
\end{equation}
for some constants $a_{1}$, $b_{1}$ and positive constants $C$, $\delta$, $t_0$, while if $\mu>\mu_\ast$, then this stationary solution is linearly unstable.
\end{theorem}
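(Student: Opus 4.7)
The plan is a Fourier-mode decomposition of the linearized system at $(\sigma_*,p_*,R_*)$ combined with a characteristic-equation analysis for the resulting family of delay differential equations. First I would substitute \eqref{m-19-1}--\eqref{m-19-3} into \eqref{m-10}--\eqref{m-17}, flatten the moving boundary $r=R_*+\varepsilon\rho$ to the fixed circle $r=R_*$ by Taylor expansion, and collect the $O(\varepsilon)$ terms. The delicate point is the delay term in \eqref{m-11}: integrating \eqref{m-12} gives $\xi(t-\tau;x,t)=x+\tau\nabla p_*(x)+O(\tau^2,\varepsilon\tau)$, and since $p_*$ is radial this displacement is radial; the unperturbed part of $\sigma(\xi,t-\tau)$ is absorbed by the stationary equation provided by Theorem \ref{mainth-t1}, while the $O(\varepsilon)$ part contributes $\mu\, w(r,\theta,t-\tau)$ as the source for $-\Delta q$, modulo lower-order-in-$\tau$ corrections that can be estimated. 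The linearization of the Robin condition \eqref{m-13} yields, on $r=R_*$, a Robin condition for $w$ involving $\rho$; the linearization of \eqref{m-14} gives Dirichlet data for $q$ involving $\rho''+\rho$ through the curvature; and \eqref{m-15} yields an evolution equation for $\rho$ driven by $-\partial_r q|_{r=R_*}$.

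Next I would expand $\rho(\theta,t)=\sum_{n\ge 0}(a_n(t)\cos n\theta+b_n(t)\sin n\theta)$ and similarly $w,q$ into Fourier modes. For each $n$, the radial profile of $w_n$ satisfies a modified Bessel equation with bounded solution proportional to $I_n(r)$, and the Robin condition at $r=R_*$ expresses the coefficient in terms of $a_n(t)$; $q_n$ can then be written explicitly as the sum of a particular solution driven by $\mu w_n(r,t-\tau)$ and a harmonic correction matching the Dirichlet datum. Substituting $\partial_r q_n(R_*,t)$ into the linearized \eqref{m-15} produces, for each $n$, a scalar delay differential equation of the form
\begin{equation*}
\dot a_n(t)=\mathcal{A}_n(\mu)\, a_n(t)+\mathcal{B}_n(\mu)\, a_n(t-\tau),
\end{equation*}
with coefficients depending on $n,\mu,\alpha,R_*$ but not explicitly on $\tau$.

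For the stability analysis I would study the associated characteristic equation $\lambda=\mathcal{A}_n(\mu)+\mathcal{B}_n(\mu)e^{-\lambda\tau}$. The mode $n=0$ is controlled by the uniqueness part of Theorem \ref{mainth-t1}, and the mode $n=1$ is the translation kernel, giving $\mathcal{A}_1+\mathcal{B}_1=0$ and a persistent term $a_1\cos\theta+b_1\sin\theta$ exactly as in \eqref{m-20}. For $n\ge 2$, the equation $\mathcal{A}_n(\mu)+\mathcal{B}_n(\mu)=0$, obtained from the characteristic equation at $\lambda=0$, defines the critical value $\mu_n^*$; since $e^{-\lambda\tau}|_{\lambda=0}=1$, this condition is independent of $\tau$. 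Setting $\mu_*=\min_{n\ge 2}\mu_n^*$, one expects, as in the delay-free case \cite{HZH-21}, that the minimum is attained at $n=2$. For $\mu<\mu_*$ every nontrivial characteristic root lies in $\{\operatorname{Re}\lambda<0\}$ (verified by continuity from the $\tau=0$ case, via an argument-principle count that excludes roots crossing the imaginary axis when $\tau$ is small), giving the exponential estimate \eqref{m-20}; for $\mu>\mu_*$ a real positive root appears at $n=2$, yielding linear instability.

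The main obstacle is the careful handling of the delay term at order $O(\varepsilon)$ along the characteristic trajectory \eqref{m-12}, ensuring that the lower-order-in-$\tau$ perturbations enter as smooth corrections that do not shift the threshold $\mu_*$; a secondary subtlety is monitoring the roots of the transcendental characteristic equation as $\tau$ varies to confirm that none crosses into $\{\operatorname{Re}\lambda\ge 0\}$ when $\mu<\mu_*$, which requires the Rouché/argument-principle comparison mentioned above.
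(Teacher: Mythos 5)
Your overall skeleton (Fourier decomposition of the linearized problem, a threshold determined by the modes $n\ge 2$, the $n=1$ translation mode producing the term $a_1\cos\theta+b_1\sin\theta$, instability from $n=2$) matches the paper, but your treatment of the delay is where the real work lies and where your argument has a genuine gap. The linearized source for $q_n$ is not simply $\mu w_n(\cdot,t-\tau)$: as in \eqref{a-16}, it also contains $\mu\,\partial_r\sigma_\ast(\xi_{10})\varphi_n(t-\tau;r,t)$, where $\varphi_n$ is the perturbation of the cell trajectory and solves \eqref{a-19}--\eqref{a-19-1}, an ODE over $[t-\tau,t]$ driven by $\partial_r q_n(\cdot,s)$ along the unperturbed path. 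Consequently the reduced equation for $\rho_n$ is a functional differential equation with distributed dependence on the history of $q_n$ (hence of $\rho_n$), not the scalar discrete-delay equation $\dot a_n=\mathcal{A}_n a_n(t)+\mathcal{B}_n a_n(t-\tau)$, and the characteristic equation $\lambda=\mathcal{A}_n+\mathcal{B}_n e^{-\lambda\tau}$ you analyze is not the true one. Moreover your claim that the coefficients do not depend on $\tau$ is inaccurate: the stationary solution itself depends on $\tau$ (the paper shows $R_\ast=R_\ast^0+\tau R_\ast^1+O(\tau^2)$ with $R_\ast^1>0$). The phrase ``lower-order-in-$\tau$ corrections that can be estimated'' is precisely the crux; the paper handles it not by a Rouch\'e argument for a DDE but by expanding every unknown in powers of $\tau$, solving the zeroth-order system explicitly to get the ODE \eqref{eq4.10} and the threshold \eqref{h-46}, and then solving and estimating the full first-order system \eqref{eq4.14}--\eqref{a-85} (including an exact computation at $n=1$ giving $d\rho_1^1/dt=0$), which is what actually justifies that the delay does not shift $\mu_\ast$ at the order retained.

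A second gap is the identification $\mu_\ast=\mu_2^0$. With the Robin (angiogenesis) coefficient $\alpha$ present, the statement that $\min_{n\ge2}\mu_n^0$ is attained at $n=2$ cannot simply be expected ``as in the delay-free case'': it requires the monotonicity of $\mu_n^0$ in $n$, which the paper proves in Lemma \ref{l4} through the positivity of the three quantities $Q_1,Q_2,Q_3$ (Lemmas \ref{hl5}--\ref{hl7}); your proposal asserts this without proof. Two smaller points: the $n=0$ mode is stable for every $\mu$ because $B_0(R^0_\ast,\alpha)<0$ (Proposition \ref{l1}), not because of uniqueness of the stationary solution; and to pass from modewise decay to the estimate \eqref{m-20} one needs a decay rate uniform in $n$ (the $\delta_2 n^3$ of Proposition \ref{l5}) in order to sum the Fourier series, a step your sketch does not address.
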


\begin{remark}
\label{rem-1}
We stress that $a_1\cos\theta+b_1\sin\theta$ is excluded in \eqref{m-20} and the reason is as follows. The problem (\ref{m-10})--(\ref{m-17}) is invariant under coordinate translations; its stationary solutions are thus not isolated in any function spaces. Hence, to study stability of the radially symmetric stationary solution ensured by Theorem \ref{mainth-t1}, the solutions of (\ref{m-10})--(\ref{m-17}) must be modulated via coordinate translations of $\mathbb{R}^2$.
\end{remark}

\begin{remark}
\label{rem-2}
Let us recall that it was shown in \cite{Xu-20} that stability holds for any $\mu$ under radial perturbations. Now, under non-radial perturbations, we find a finite value $\mu_\ast$ for which the radially symmetric stationary solution changes from stability to instability. The result suggests that larger tumor aggressiveness would induce instability, which is biologically reasonable.
\end{remark}

Now, we present the impact of time delays and  angiogenesis on tumor modeled by the problem  (\ref{m-10})--(\ref{m-17}).
\vspace{2mm}

\noindent{\bf {Conclusion 1}}\quad
 {\it
The increase of the rate of angiogenesis would result in the reduction of the threshold value $\mu_\ast$ for fixed size of the stationary tumor.
}

 \vspace{2mm}

%\noindent{\bf {Conclusion 2}}\quad
% {\it Introducing the time delay into the system would %not alter the threshold value $\mu_\ast$.}

 \vspace{2mm}
\noindent{\bf {Conclusion 2}}\quad
 {\it In contrast to that without time delays, introducing time delays brings the following impacts: (i) it would result in a larger stationary tumor, and the larger the tumor aggressiveness parameter $\mu$ is, the greater impact time delays have on the size of the stationary tumor;
 (ii) it would not alter the threshold value $\mu_\ast$.
 }

 \vspace{2mm}
The structure of this paper is as follows. In Section 2, we collect various results on the modified Bessel functions that are required in the subsequent sections. In Section 3, we establish the existence and uniqueness of the radially symmetric stationary solution $(\sigma_\ast, p_\ast, R_\ast)$. In Section 4, we analyze the linearization of the system about $(\sigma_\ast, p_\ast, R_\ast)$, we give the proofs of Theorem  \ref{mainth-t2}, Conclusions 1 \& 2 by discussing the expansion terms in $\tau$.

\section{Preliminaries}
In this section, we collect some known identities and inequalities for the modified Bessel functions, which will be needed in the latter part of this paper.

The modified Bessel function given by
$$
I_n(r)=\sum_{k=0}^\infty\frac1{k!\Gamma(n+k+1)}\left(\frac{r}2\right)^{n+2k}\quad{\rm for}~n\ge0~{\rm and}~r>0,
$$
satisfies (see \cite{FF-03,FH-06,FR-01})
\begin{align}
&I_n''(r)+\frac1r I_n'(r)-\left(1+\frac{n^2}{r^2}\right)I_n(r)=0,\label{eq2.1}
\\
&I_{n+1}(r)=I_{n-1}(r)-\frac{2n}r I_n(r),
~n\ge1,\label{eq2.2}
\\
&I'_n(r)+\frac{n}r I_n(r)=I_{n-1}(r),~ n\ge1,\label{eq2.3}
\\
&I'_n(r)-\frac{n}r I_n(r)=I_{n+1}(r),~ n\ge0,\label{eq2.4}
\\
&I_{n-1}(r)I_{n+1}(r)<I_n^2(r),~n\ge1,\label{eq2.5}
\\
&I_{n-1}(r)I_{n+1}(r)>I_n^2(r)-\frac2r I_n(r)I_{n+1}(r),~n\ge1,\label{eq2.5-1}
\\
&I_n(r)=\left(\frac1{2\pi r}\right)^{1/2}e^r\left[1-\frac{4n^2-1}{8r}+O(r^{-2})\right]~
{\rm as}~r\to\infty,\label{eq2.9}
\\
&I_m(r)I_n(r)=\sum_{k=0}^\infty\frac{\Gamma(m+n+2k+1)(r/2)^{m+n+2k}}
{k!\Gamma(m+k+1)\Gamma(n+k+1)\Gamma(m+n+k+1)}.\label{eq2.16}
\end{align}
Particularly,
\begin{align}
&\frac{d}{dr}[r I_1(r)]=r I_0(r),\label{eq2.19}
\\
&\frac{d}{dr}[r^2I_0(r)-2r I_1(r)]=r^2I_1(r),\label{eq2.20}
\\
&\frac{d}{dr}\left[\frac12r^2(I^2_1(r)-I^2_0(r))+r I_0(r)I_1(r)\right]=r I^2_1(r).
\label{eq2.21}
\end{align}
Furthermore, if we consider the differential operator
\begin{equation}
\label{Ln}
L_n=-\partial_{rr}-\frac1r\partial_r+\frac{n^2}{r^2},
\end{equation}
then
\begin{align}
&L_1(r[1-2I_2(r)])=2rI_0(r),
\label{eq2.22}
\\
&L_1(r[-I^2_1(r)+I_0(r)I_2(r)])=4I_1(r)\left[I_0(r)-\frac{I_1(r)}{r}\right].
\label{eq2.23}
\end{align}

Now, let us define
\begin{align*}
P_n(r)=\frac{I_{n+1}(r)}{r I_n(r)}\quad{\rm for}~r>0, ~n=0,1,2,\dots.
\end{align*}
Then the following relations can be derived from the preceding properties of the modified Bessel functions:
\begin{align}
&P_n(r)>P_{n+1}(r),\label{eq2.7}
\\
&P_n(r)=\frac1{r^2P_{n+1}(r)+2(n+1)},\quad P_n(0)=\frac1{2n+2},\label{eq2.13}
\\
&P_n'(r)=\frac1r-\frac{2(n+1)}{r}P_n(r)-r P_n^2(r),\label{eq2.14}
\\
&\frac{d}{dr}(r P_0(r))>0,\label{eq2.17}
\\
&\lim_{r\to\infty}P_n(r)=0,\quad\lim_{r\to\infty}r P_n(r)=1.\label{eq2.18}
\end{align}
In addition, we obtain from \citep[(2.15)~and~(2.16)]{FH-06} that
\begin{equation}
P_n'(r)<0.\label{eq2.15}
\end{equation}

\begin{lemma}(\citep[Lemma 2.4]{FR-01})
\label{lem-2.1}
Let $n\ge2$. Then the function
\begin{align*}
G_n(r)=r^2[P_1(r)-P_n(r)]
\end{align*}
satisfies
\begin{align*}
G_n'(r)>0\quad{\rm for~all}~r>0,
\end{align*}
and
\begin{equation}
\label{eq2.11}
\lim_{r\to0^+}G_n(r)=0,\quad\lim_{r\to\infty}G_n(r)=n-1.
\end{equation}
\end{lemma}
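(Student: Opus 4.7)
The plan is to split the proof into two parts: the two limit identities, which reduce to short asymptotic computations, and the strict positivity $G_n'(r)>0$, which is where the real work lies. For the limits I would first observe that by (\ref{eq2.13}) each $P_n$ extends continuously to $r=0$ with $P_n(0)=1/(2n+2)$, so the prefactor $r^2$ forces $G_n(r)\to 0$ as $r\to 0^+$. For $r\to\infty$ I would substitute (\ref{eq2.9}) into $P_n(r)=I_{n+1}(r)/[rI_n(r)]$; a routine ratio expansion yields $P_n(r)=1/r-(2n+1)/(2r^2)+O(r^{-3})$, hence $r^2 P_n(r)=r-(2n+1)/2+O(r^{-1})$. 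The divergent parts cancel in $G_n(r)=r^2P_1(r)-r^2P_n(r)$, leaving the limit $(2n+1)/2-3/2=n-1$.

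For the monotonicity I would reformulate $G_n'(r)$ as a difference of Turán quotients. Setting $J_n(r)=r^2P_n(r)=rI_{n+1}(r)/I_n(r)$, the Riccati identity (\ref{eq2.14}) rearranges, after multiplication by $r^2$, into $J_n'(r)=r-(J_n^2+2nJ_n)/r$. Thus
\[
G_n'(r)=J_1'(r)-J_n'(r)=\frac{1}{r}\bigl[J_n(J_n+2n)-J_1(J_1+2)\bigr].
\]
Using the recurrence (\ref{eq2.2}) one has $J_n+2n=rI_{n-1}(r)/I_n(r)$, so $J_n(J_n+2n)=r^2I_{n-1}(r)I_{n+1}(r)/I_n^2(r)$ and
\[
G_n'(r)=r\left[\frac{I_{n-1}(r)I_{n+1}(r)}{I_n^2(r)}-\frac{I_0(r)I_2(r)}{I_1^2(r)}\right].
\]
The desired $G_n'(r)>0$ is therefore equivalent to showing that the Turán quotient $T_k(r):=I_{k-1}(r)I_{k+1}(r)/I_k^2(r)$ is strictly larger at $k=n\ge 2$ than at $k=1$.

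The hard part will be this Turán-quotient comparison, since the existing bound $T_k<1$ from (\ref{eq2.5}) is index-independent. I would attack it through the Cauchy product (\ref{eq2.16}): both $I_{n-1}I_{n+1}$ and $I_n^2$ expand as positive series in the same monomials $(r/2)^{2n+2k}$, and a direct computation gives the ratio of $k$th coefficients equal to $(n+k)/(n+k+1)$, which is strictly increasing in $n$ for every fixed $k\ge 0$. Lifting this pointwise coefficient comparison to the weighted-average inequality $T_n(r)>T_1(r)$ for all $r>0$ and $n\ge 2$ requires a Chebyshev-type manipulation that handles the $n$-dependence of the weights. An equivalent and perhaps cleaner route, better suited to induction on $n$, is to prove strict log-convexity of the sequence $v_n(r):=I_n(r)/I_{n-1}(r)$ using the three-term recurrence $v_{n+1}=1/v_n-2n/r$ extracted from (\ref{eq2.2}); since $T_n=v_{n+1}/v_n$, the desired strict monotonicity of $T_n$ in $n$ is precisely $v_n v_{n+2}>v_{n+1}^2$. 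Either path, combined with the reductions above, yields $G_n'(r)>0$ for all $r>0$ and $n\ge 2$.
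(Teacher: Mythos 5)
Your limit computations are fine, and your reduction of the monotonicity claim is correct: with $J_n=r^2P_n=rI_{n+1}/I_n$, the Riccati identity \eqref{eq2.14} does give $J_n'=r-(J_n^2+2nJ_n)/r$, and \eqref{eq2.2} turns $J_n(J_n+2n)$ into $r^2I_{n-1}I_{n+1}/I_n^2$, so that indeed $G_n'(r)=r\bigl[I_{n-1}(r)I_{n+1}(r)/I_n^2(r)-I_0(r)I_2(r)/I_1^2(r)\bigr]$. For calibration, note that the paper itself offers no proof of this lemma: it is quoted directly from Friedman and Reitich \cite{FR-01} (their Lemma 2.4), so there is no in-paper argument to compare against and the full burden of the monotonicity statement falls on your argument.

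The genuine gap is that the single inequality on which everything now rests, $T_n(r)>T_1(r)$ for all $r>0$ and $n\ge2$ with $T_k=I_{k-1}I_{k+1}/I_k^2$, is never actually proved. Your coefficient computation from \eqref{eq2.16} is correct ($I_{n-1}I_{n+1}$ and $I_n^2$ are series in the same monomials with coefficient ratio $(n+k)/(n+k+1)$), but, as you concede, this presents $T_n(r)$ and $T_1(r)$ as weighted averages taken with \emph{different}, $n$-dependent weight systems, so the termwise inequality $(n+k)/(n+k+1)>(1+k)/(2+k)$ does not transfer; the promised ``Chebyshev-type manipulation'' is precisely the missing argument, and it is the heart of the lemma, not a routine finish. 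The alternative route, strict log-convexity $v_nv_{n+2}>v_{n+1}^2$ of $v_n=I_n/I_{n-1}$, is likewise only named, and the recurrence $v_{n+1}=1/v_n-2n/r$ does not yield it by any short manipulation. That the remaining step cannot be dispatched by the coarse bounds available in Section 2 (such as \eqref{eq2.5} or \eqref{eq2.5-1}) is visible from \eqref{eq2.9}: every $T_n(r)=1-1/r+O(r^{-2})$ with the $1/r$ term independent of $n$, so $T_n-T_1$ is only $O(r^{-2})$ at infinity (consistent with $G_n\to n-1$, $G_n'\to0$), and any proof must be sharp to that order. As written, your proposal is a correct reduction plus a plan; to complete it you must either carry out the weighted-average (or log-convexity) comparison in full or fall back on the argument in \cite{FR-01}.
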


We conclude this section by presenting the following result, whose proof is similar to that of Lemma 3.4 in \cite{HZH-17} and so we omit the details here.

\begin{lemma}\label{hl4}
Let $S_1(n)>0$, $S_2(n)>0$ and
\begin{align*}
f_n(r)=S_1(n)P_{n+1}(r)-S_2(n)P_n(r).
\end{align*}
If $f_n(0)>0$, then $f_n(r)>0$ for all $r>0$.
\end{lemma}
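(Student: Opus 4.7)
The plan is to re-express the inequality in question as a statement about the modified Bessel functions $I_n$ themselves, and then verify a sharp Tur\'an-type inequality by term-by-term comparison in the product series \eqref{eq2.16}.

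Using $P_m(r) = I_{m+1}(r)/(r I_m(r))$ and multiplying by the positive quantity $r I_n(r) I_{n+1}(r)$, one rewrites
\begin{equation*}
r I_n(r) I_{n+1}(r) f_n(r) = S_1(n)\, I_n(r) I_{n+2}(r) - S_2(n)\, I_{n+1}^2(r).
\end{equation*}
So for $r>0$, $f_n(r)>0$ is equivalent to $S_1 I_n I_{n+2} > S_2 I_{n+1}^2$, while the hypothesis $f_n(0)>0$, combined with $P_m(0)=1/(2m+2)$, translates to $S_1(n)/S_2(n) > (n+2)/(n+1)$. The claim will therefore follow once I prove the refined Tur\'an-type inequality
\begin{equation*}
(n+1)\, I_{n+1}^2(r) < (n+2)\, I_n(r)\, I_{n+2}(r) \quad\text{for all } r>0,\ n\ge 0,
\end{equation*}
because then $S_1 I_n I_{n+2} > S_1 \cdot \tfrac{n+1}{n+2}\, I_{n+1}^2 > S_2\, I_{n+1}^2$.

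To establish this refined inequality I would expand both products via \eqref{eq2.16}; each becomes a power series in $(r/2)^{2n+2k+2}$ with common prefactor $\Gamma(2n+2k+3)/[k!\,\Gamma(2n+k+3)]$. Simplifying the gamma-quotient $\Gamma(n+k+2)^2/[\Gamma(n+k+1)\Gamma(n+k+3)]$ to $(n+k+1)/(n+k+2)$ via $\Gamma(m+1)=m\Gamma(m)$, and using the algebraic identity $(n+2)(n+k+1)-(n+1)(n+k+2)=k$, the $k$th coefficient of $(n+2)I_n I_{n+2}-(n+1)I_{n+1}^2$ reduces to a positive multiple of $k$. Hence the $k=0$ contribution vanishes, every $k\ge 1$ contribution is strictly positive, and the inequality holds on $(0,\infty)$.

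The one subtlety is selecting the correct weighting $(n+2):(n+1)$ in the Tur\'an-type inequality; it is dictated by the limit $I_{n+1}^2(r)/[I_n(r) I_{n+2}(r)] \to (n+2)/(n+1)$ as $r \to 0^+$, which coincides precisely with the critical ratio arising from $f_n(0)>0$. Once this weighting is in place, the series comparison is routine, so no further structural difficulty is expected.
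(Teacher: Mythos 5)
Your proposal is correct, and I checked the key computation: with the common prefactor $\Gamma(2n+2k+3)\,(r/2)^{2n+2k+2}/[k!\,\Gamma(2n+k+3)]$, the $k$th coefficient of $(n+2)I_nI_{n+2}-(n+1)I_{n+1}^2$ is indeed proportional to $(n+2)(n+k+1)-(n+1)(n+k+2)=k$ times a positive factor, so the refined Tur\'an-type inequality $(n+1)I_{n+1}^2(r)<(n+2)I_n(r)I_{n+2}(r)$ holds for $r>0$, and together with $f_n(0)>0\Leftrightarrow S_1/S_2>(n+2)/(n+1)$ this gives $f_n(r)>0$. However, your route is genuinely different from the one the paper relies on: the paper omits the proof and refers to Lemma 3.4 of \cite{HZH-17}, which is a differential-inequality argument of exactly the type used for Lemma \ref{hl6} here --- one uses the Riccati-type relation \eqref{eq2.14} to write $f_n'(r)=W_1(n,r)f_n(r)+W_2(n,r)$ with $W_2>0$, notes $f_n>0$ near $r=0$ by continuity and \eqref{eq2.13}, and concludes positivity for all $r>0$ by the variation-of-constants representation. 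Your series argument via \eqref{eq2.16} is self-contained and arguably sharper: it isolates the exact threshold ratio $(n+2)/(n+1)$ (showing the hypothesis $f_n(0)>0$ is precisely the sharp condition, attained in the limit $r\to0^+$), it works for all $n\ge0$, and it even yields strict positivity on $(0,\infty)$ in the borderline case $f_n(0)=0$. What the paper's ODE route buys in exchange is uniformity of method --- the same integrating-factor scheme handles combinations (such as $J_{2,n}$ in Lemma \ref{hl6}) whose series structure is not amenable to a clean term-by-term comparison --- whereas your approach is tied to the bilinear product formula \eqref{eq2.16}.
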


\section{Radially symmetric stationary solution}

In this section, we establish the existence and uniqueness of the radially symmetric stationary solution $(\sigma_\ast(r), p_\ast(r), R_\ast)$ to the system
\eqref{m-10}--\eqref{m-17}, which satisfies
\begin{align}
&\sigma''_\ast(r)+\frac1r\sigma'_\ast(r)=\sigma_\ast(r), &&0<r<R_\ast, \label{mn-1}
\\
&p''_\ast(r)+\frac1rp'_\ast(r)=-\mu[\sigma_\ast(\xi_\ast(-\tau; r, 0))-\tilde{\sigma}], &&0<r<R_\ast, \label{mn-2}
\\
&\begin{cases}
      \frac{d\xi_\ast}{ds}(s; r, 0)=-p'_\ast(\xi_\ast(s; r, 0)), & -\tau\leq s\leq 0,\\
      \xi_\ast(s; r, 0)=r, & s=0,
\end{cases}
\label{mn-3}
\\
&\sigma'_\ast(0)=0,\quad \sigma'_\ast(R_\ast)+\alpha(\sigma_\ast(R_\ast)-\bar\sigma)=0, \label{mn-4-1}
\\
&p'_\ast(0)=0,\quad p_\ast(R_\ast)=\frac{1}{R_\ast}, \label{mn-4-2}
\\
&\int^{R_\ast}_0[\sigma_\ast(\xi_\ast(-\tau; r, 0))-\tilde{\sigma}]rdr=0.\label{mn-5}
\end{align}

\begin{proof}[\indent\it\bfseries Proof of Theorem \ref{mainth-t1}]

Introducing the change of variables
$$
\hat r=\frac{r}{R_\ast}, \quad \hat\sigma_\ast(\hat r)=\sigma_\ast(r),\quad \hat p_\ast(\hat r)=R_\ast p_\ast(r), \quad \hat\xi_\ast(s; \hat r, 0)=\frac{\xi_\ast(s; r, 0)}{R_\ast},
$$
the system \eqref{mn-1}--\eqref{mn-5} is reduced, after dropping all hats, to the following system:
\begin{align}
&\sigma''_\ast(r)+\frac1r\sigma'_\ast(r)=R_\ast^2\sigma_\ast(r), &&0<r<1,  \label{mn-7}
\\
&p''_\ast(r)+\frac1rp'_\ast(r)=-\mu R_\ast^3[\sigma_\ast(\xi_\ast(-\tau; r, 0))-\tilde{\sigma}], &&0<r<1,\label{mn-8}
\\
&\begin{cases}
      \frac{d\xi_\ast}{ds}(s; r, 0)=-\frac1{R_\ast^3}p'_\ast(\xi_\ast(s; r, 0)), & -\tau\leq s\leq 0,\\
      \xi_\ast(s; r, 0)=r, & s=0,
\end{cases}
\label{mn-6}
\\
&\sigma'_\ast(0)=0,\quad \sigma'_\ast(1)+\alpha R_\ast(\sigma_\ast(1)-\bar\sigma)=0, \label{eq3.1}
\\
&p'_\ast(0)=0,\quad p_\ast(1)=1, \label{eq3.2}
\\
&\int^1_0[\sigma_\ast(\xi_\ast(-\tau; r, 0))-\tilde{\sigma}]rdr=0.  \label{mn-9}
\end{align}

Using \eqref{eq2.1}, a unique solution of \eqref{mn-7} and \eqref{eq3.1} can be given explicitly by
$$
\sigma_\ast(r)=\frac{\alpha\bar\sigma}{\alpha+R_\ast P_0(R_\ast)}\frac{I_0(R_\ast r)}{I_0(R_\ast)},\quad 0<r<1.
$$
In the sequel, the proof of the existence and uniqueness of $p_\ast(r)$ and $R_\ast$ is quite similar to that of the problem with Dirichlet boundary condition \eqref{m-13-D} (see \cite{ZH-1}), we only need to verify that the function
\begin{equation}
\label{eq3.4}
F(R,0)=\frac{\alpha\bar\sigma}{\alpha+R P_0(R)} P_0(R)-\frac{\tilde\sigma}2
\end{equation}
is monotone decreasing and admits a unique positive zero point for any $\alpha>0$ and any $0<\tilde\sigma<\bar\sigma$, which is ensured by
\eqref{eq2.13} and \eqref{eq2.17}-\eqref{eq2.15}. Hence, for brevity we omit the details here. The proof is complete.
\end{proof}

\begin{remark}
\label{rem-4}
It is worth noting that \eqref{mn-2} together with \eqref{mn-5} implies that
\begin{equation}
\label{eq3.3}
p'_\ast(R_\ast)=0.
\end{equation}
Furthermore, one can obtain from \eqref{mn-3} and \eqref{eq3.3} that
$$
\xi_\ast(s;R_\ast,0)\equiv R_\ast, 0\le\xi_\ast(s;r,0)\le R_\ast,\quad -\tau\le s\le 0,\quad 0\le r\le R_\ast.
$$
\end{remark}

\begin{remark}
\label{rem-5}
It should be pointed out that our stationary solution differs from the classical one. As explained in \cite{ZH-1}, although the free boundary does not move in time, the velocity field inside the stationary tumor is not zero, and movements are necessary to replace dead cells with new daughter cells to reach an equilibrium; because of the time delay, such replacement requires a time $\tau$ for the mitosis to complete and for the daughter cells to move into the right place, reflected by \eqref{mn-3}. Thus, the delay-time derivative can not be set to be zero even for our stationary solution.
\end{remark}

\section{Linear stability}

In this section, we first study the linearization of the problem \eqref{m-10}--\eqref{m-17} with $c=0$ about the radially symmetric stationary solution $(\sigma_\ast,p_\ast,R_\ast)$, and then give the proof of Theorem \ref{mainth-t2}. Meanwhile, the effects of time delay and angiogenesis on the stability and the size of the stationary tumor are discussed.

We shall use the notations \eqref{m-18}--\eqref{m-19-3}, and $\vec{e}_r$, $\vec{e}_\theta$ for the unit vectors in $r$, $\theta$
directions, respectively. Then, written in the rectangular coordinates in $\mathbb{R}^2$,
$$
\vec{e}_r=(\cos\theta,\sin\theta)^\tau,\quad \vec{e}_\theta=(-\sin\theta,\cos\theta)^\tau.
$$
Noticing that the cell location function $\xi(s;r,\theta,t)$ is taken into account here, if
$\xi_1(s;r,\theta,t)$, $\xi_2(s;r,\theta,t)$ are used to denote the polar radius and angle of point $\xi$, respectively, then
\begin{equation}
\label{xi}
\xi(s;r,\theta,t)=\xi_1(s; r, \theta, t)\vec{e}_r(\xi)=\xi_1(s; r, \theta, t)(\cos\xi_2(s;r,\theta,t),\sin\xi_2(s;r,\theta,t))^\tau.
\end{equation}
Expand $\xi_1$, $\xi_2$ in $\varepsilon$ as
\begin{eqnarray}
\label{a-4}
  & \left\{
    \begin{aligned}
&\xi_1=\xi_{10}+\varepsilon\xi_{11}+O(\varepsilon^2), \\
&\xi_2=\xi_{20}+\varepsilon\xi_{21}+O(\varepsilon^2),
\end{aligned}
    \right.
\end{eqnarray}
and it then follows from \eqref{m-12} and \eqref{m-19-3} that
\begin{equation}
\label{xi-10}
\begin{cases}
&\frac{d\xi_{10}}{ds}=-\frac{\partial p_\ast}{\partial r}(\xi_{10}),\quad t-\tau\le s\le t,\\
&\xi_{10}\big|_{s=t}=r;
\end{cases}
\end{equation}
\begin{equation}
\label{xi-11}
\begin{cases}
&\frac{d\xi_{11}}{ds}=-\frac{\partial^2 p_\ast}{\partial r^2}(\xi_{10})\xi_{11}
-\frac{\partial q}{\partial r}(\xi_{10},\xi_{20},s),\quad t-\tau\le s\le t,\\
&\xi_{11}\big|_{s=t}=0;
\end{cases}
\end{equation}
\begin{equation}
\label{xi-20}
\begin{cases}
&\frac{d\xi_{20}}{ds}=0,\quad t-\tau\le s\le t,\\
&\xi_{20}\big|_{s=t}=\theta;
\end{cases}
\end{equation}
\begin{equation}
\label{xi-21}
\begin{cases}
&\frac{d\xi_{21}}{ds}=-\frac1{\xi_{10}^2}\frac{\partial q}{\partial\theta}(\xi_{10},\xi_{20},s),\quad t-\tau\le s\le t,\\
&\xi_{21}\big|_{s=t}=0.
\end{cases}
\end{equation}
One can easily find that $\xi_{20}\equiv\theta$ and the problem \eqref{xi-10} for $\xi_{10}$ is the same as \eqref{mn-3} for $\xi_\ast$ in the radially symmetric case; thus $\xi_{10}$ is independent of $\theta$.

We further substitute \eqref{m-19-1}-\eqref{m-19-3}, \eqref{xi}--\eqref{xi-21} into \eqref{m-10}, \eqref{m-11}, \eqref{m-13}--\eqref{m-15} and collect the $\varepsilon$-order terms,
to obtain the linearized system (see \cite{HZH-19(1),ZH-1})
\begin{align*}
&\Delta w=w, & 0<r<R_\ast, ~ t>0,
\\
&\pd wr(R_\ast, \theta, t)+\alpha w(R_\ast, \theta, t)=-\left(\frac{\partial^2\sigma_\ast}{\partial r^2}+\alpha\frac{\partial \sigma_\ast}{\partial r}\right)\bigg|_{r=R_\ast}\rho(\theta,t),
\\
&\Delta q=-\mu\left[\frac{\partial\sigma_\ast}{\partial r}(\xi_{10}(t-\tau; r, t))\xi_{11}(t-\tau; r, \theta, t)+w(\xi_{10}(t-\tau; r, t), \theta, t-\tau)\right], & 0<r<R_\ast, ~ t>0,
\\
&q(R_\ast, \theta, t)=-\frac{1}{R_\ast^2}[\rho(\theta,t)+\rho_{\theta\theta}(\theta,t)],
\\
&\frac{d\rho}{dt}=-\frac{\partial^2 p_\ast}{\partial r^2}\bigg|_{r=R_\ast}\rho(\theta,t)-\frac{\partial q}{\partial r}\bigg|_{r=R_\ast}, & t>0.
\end{align*}

Now, we proceed to seek solutions of the form
\begin{align*}
&w(r,\theta,t)=w_{n}(r,t)\cos(n\theta),
\\
&q(r,\theta,t)=q_{n}(r,t)\cos(n\theta),
\\
&\rho(\theta,t)=\rho_{n}(t)\cos(n\theta),
\\
&\xi_{11}(s;r,\theta,t)=\varphi_{n}(s;r,t)\cos(n\theta).
\end{align*}
Similarly, we can also seek solutions of the form
\begin{align*}
&w(r, \theta, t)=w_{n}(r, t)\sin(n\theta),
\\
&q(r, \theta, t)=q_{n}(r, t)\sin(n\theta),
\\
&\rho(\theta, t)=\rho_{n}(t)\sin(n\theta),
\\
&\xi_{11}(s; r, \theta, t)=\varphi_{n}(s; r, t)\sin(n\theta).
\end{align*}
Here, since the concrete expression for $\xi_{21}$ is not needed, we will not write it in detail.
Thus, using the relation
$$
\Delta=\partial_{rr}+\frac{1}{r}\partial_{r}+\frac{1}{r^2}\partial_{\theta\theta},
$$
we get
\begin{align}
&-\Delta w_n+\left(\frac{n^2}{r^2}+1\right)w_n=0,\qquad\qquad\qquad\qquad\qquad\qquad\qquad\qquad\qquad\qquad 0<r<R_\ast, \label{a-14}
\\
&\frac{\partial w_n}{\partial r}(R_\ast, t)+\alpha w_n(R_\ast, t)=-\left(\frac{\partial^2 \sigma_\ast}{\partial r^2}+\alpha\frac{\partial \sigma_\ast}{\partial r}\right)\bigg|_{r=R_\ast}\rho_n(t), \label{a-15}
\\
&-\Delta q_n+\frac{n^2}{r^2}q_n=\mu\left[w_n(\xi_{10}(t-\tau; r, t), t-\tau)
+\frac{\partial\sigma_\ast}{\partial r}(\xi_{10}(t-\tau; r, t))\varphi_n(t-\tau; r, t)\right], ~0<r<R_\ast,
\label{a-16}\\
&q_n(R_\ast, t)=\frac{n^2-1}{R^2_\ast}\rho_n(t), \label{a-17}
\\
&\frac{d\rho_n(t)}{dt}=-\frac{\partial^2 p_\ast}{\partial r^2}\bigg|_{r=R_\ast}\rho_n(t)-\frac{\partial q_n}{\partial r}\bigg|_{r=R_\ast}, \label{a-18}
\\
&\frac{d\varphi_n(s;r,t)}{ds}=-\frac{\partial^2 p_\ast}{\partial r^2}(\xi_{10}(s;r,t))\varphi_n(s;r, t)-\frac{\partial q_n}{\partial r}(\xi_{10}(s;r,t),s), \qquad\qquad t-\tau\leq s\leq t, \label{a-19}
\\
&\varphi_n\big|_{s=t}=0.\label{a-19-1}
\end{align}

\subsection{Expansion in $\tau$}

It may be impossible to solve the system \eqref{mn-1}-\eqref{mn-5}, \eqref{a-14}-\eqref{a-19-1} explicitly. Thus, in order to study the impact of $\tau$ on this system, we write
\begin{align*}
R_\ast&=R_\ast^0+\tau R_\ast^1+O(\tau^2), \\
\sigma_\ast&=\sigma_\ast^0+\tau\sigma_\ast^1+O(\tau^2), \\
p_\ast&=p_\ast^0+\tau p_\ast^1+O(\tau^2), \\
w_n&=w_n^0+\tau w_n^1+O(\tau^2), \\
q_n&=q_n^0+\tau q_n^1+O(\tau^2), \\
\rho_n&=\rho_n^0+\tau\rho_n^1+O(\tau^2),
\end{align*}
which is reasonable because the time delay $\tau$ is actually very small.
Substituting these expansions into the system, as in \cite{ZH-1}, one can obtain two separate systems. One for all zeroth-order terms in $\tau$ is
\begin{align}
\label{eq4.12}
&\frac{\partial^2\sigma^0_\ast}{\partial r^2}+\frac{1}{r}\frac{\partial\sigma^0_\ast}{\partial r}=\sigma^0_\ast, \quad 0<r<R^0_\ast, &&\frac{\partial\sigma^0_\ast}{\partial r}(R^0_\ast)+\alpha[\sigma^0_\ast(R^0_\ast)-\bar{\sigma}]=0,
\\
\label{a-53}
&-\frac{\partial^2p^0_\ast}{\partial r^2}-\frac{1}{r}\frac{\partial p^0_\ast}{\partial r}=\mu(\sigma^0_\ast-\tilde{\sigma}), \quad 0<r<R^0_\ast,&&
p^0_\ast(R^0_\ast)=\frac{1}{R^0_\ast},
\\
\label{eq4.13}
&\int^{R^0_\ast}_0[\sigma^0_\ast(r)-\tilde\sigma]rdr=0,&&
\\
\label{a-54}
&-\frac{\partial^2\omega^0_n}{\partial r^2}-\frac{1}{r}\frac{\partial\omega^0_n}{\partial r}+\left(\frac{n^2}{r^2}+1\right)\omega^0_n=0, \quad 0<r<R^0_\ast,
&&
\frac{\partial\omega^0_n}{\partial r}(R^0_\ast, t)+\alpha\omega^0_n(R^0_\ast, t)=-\lambda\rho^0_n(t),
\\
\label{a-55}
&-\frac{\partial^2q^0_n}{\partial r^2}-\frac{1}{r}\frac{\partial q^0_n}{\partial r}+\frac{n^2}{r^2}q^0_n=\mu\omega^0_n, \quad 0<r<R^0_\ast, &&
q^0_n(R^0_\ast, t)=\frac{n^2-1}{(R^0_\ast)^2}\rho^0_n(t),
\\
\label{a-56}
&\frac{d\rho^0_n(t)}{dt}=-\frac{\partial^2p^0_\ast}{\partial r^2}(R^0_\ast)\rho^0_n(t)-\frac{\partial q^0_n}{\partial r}(R^0_\ast,t),&&
\end{align}
where
$$
\lambda=\left(\frac{\partial^2\sigma^0_\ast}{\partial r^2}+\alpha\frac{\partial\sigma^0_\ast}{\partial r}\right)\bigg|_{r=R^0_\ast}.
$$
The other for all first-order terms in $\tau$ is the following:
\begin{align}
&\frac{\partial^2\sigma^1_\ast}{\partial r^2}+\frac{1}{r}\frac{\partial\sigma^1_\ast}{\partial r}=\sigma^1_\ast, \quad 0<r<R^0_\ast,
\quad \frac{\partial\sigma^1_\ast}{\partial r}(R^0_\ast)+\alpha\sigma^1_\ast(R^0_\ast)=-\lambda R^1_\ast,
\label{eq4.14}
\\
&-\frac{\partial^2p^1_\ast}{\partial r^2}-\frac{1}{r}\frac{\partial p^1_\ast}{\partial r}=\mu\left(\frac{\partial \sigma^0_\ast}{\partial r}\frac{\partial p^0_\ast}{\partial r}+\sigma^1_\ast\right), \quad 0<r<R^0_\ast,
\quad p^1_\ast(R^0_\ast)=-\left[\frac1{(R^0_\ast)^2}+\frac{\partial p^0_\ast}{\partial r}(R^0_\ast)\right]R^1_\ast, \label{a-82}
\\
&[\sigma^0_\ast(R^0_\ast)-\tilde\sigma]R^0_\ast R^1_\ast+\int^{R^0_\ast}_0\left[\frac{\partial\sigma^0_\ast}{\partial r}(r)\frac{\partial p^0_\ast}{\partial r}(r)+\sigma^1_\ast(r)\right]rdr=0,\label{a-81}
\\
&-\frac{\partial^2\omega^1_n}{\partial r^2}-\frac{1}{r}\frac{\partial\omega^1_n}{\partial r}+\left(\frac{n^2}{r^2}+1\right)\omega^1_n=0, \quad 0<r<R^0_\ast, \label{a-83}
\\
&\frac{\partial\omega^1_n}{\partial r}(R^0_\ast, t)+\alpha\omega^1_n(R^0_\ast, t)=-\left[\frac{\partial^2\omega^0_n}{\partial r^2}(R^0_\ast, t)+\alpha\frac{\partial\omega^0_n}{\partial r}(R^0_\ast, t)\right]R^1_\ast-\lambda\rho^1_n(t)\nonumber
\\
&~~~~~~~~~~~~~~~~~~~~~~~~~~
-\left[\frac{\partial^3 \sigma^0_\ast}{\partial r^3}(R^0_\ast)R^1_\ast+\alpha\frac{\partial^2\sigma^0_\ast}{\partial r^2}(R^0_\ast)R^1_\ast+\frac{\partial^2\sigma^1_\ast}{\partial r^2}(R^0_\ast)+\alpha\frac{\partial \sigma^1_\ast}{\partial r}(R^0_\ast)\right]\rho^0_n(t),
\label{a-83-1}
\\
&-\frac{\partial^2q^1_n}{\partial r^2}-\frac{1}{r}\frac{\partial q^1_n}{\partial r}+\frac{n^2}{r^2}q^1_n=\mu\left(\frac{\partial\sigma^0_\ast}{\partial r}\frac{\partial q^0_n}{\partial r}+\frac{\partial \omega^0_n}{\partial r}\frac{\partial p^0_\ast}{\partial r}-\frac{\partial\omega^0_n}{\partial t}+\omega^1_n\right), \quad 0<r<R^0_\ast, \label{a-84}
\\
&q^1_n(R^0_\ast, t)=-\frac{\partial q^0_n}{\partial r}(R^0_\ast, t)R^1_\ast+\frac{n^2-1}{(R^0_\ast)^2}\rho^1_n(t)-\frac{2(n^2-1)}{(R^0_\ast)^3}R^1_\ast\rho^0_n(t),
\label{a-84-1}
\\
&\frac{d\rho^1_n(t)}{dt}=-\frac{\partial^2p^0_\ast}{\partial r^2}(R^0_\ast)\rho^1_n(t)-\left[\frac{\partial^3p^0_\ast}{\partial r^3}(R^0_\ast)R^1_\ast
+\frac{\partial^2p^1_\ast}{\partial r^2}(R^0_\ast)\right]\rho^0_n(t)-\frac{\partial^2q^0_n}{\partial r^2}(R^0_\ast, t)R^1_\ast-\frac{\partial q^1_n}{\partial r}(R^0_\ast, t).\label{a-85}
\end{align}

\subsection{Zeroth-order terms in $\tau$}

We now solve the system \eqref{eq4.12}--\eqref{a-56} explicitly. Using \eqref{eq2.1}, it is easy to see from \eqref{eq4.12} that
\begin{equation}
\label{a-51}
\sigma^0_\ast(r)=\frac{\alpha\bar{\sigma}}{\alpha+R^0_\ast P_0(R^0_\ast)}\frac{I_0(r)}{I_0(R^0_\ast)}.
\end{equation}
Then by \eqref{eq2.3} and \eqref{eq2.4}, we compute
\begin{align}
\frac{\partial\sigma^0_\ast}{\partial r}(r)=&\frac{\alpha\bar{\sigma}}{\alpha+R^0_\ast P_0(R^0_\ast)}\frac{I_1(r)}{I_0(R^0_\ast)}=\frac{\alpha\bar{\sigma}}{\alpha+R^0_\ast P_0(R^0_\ast)}\frac{rI_0(r)P_0(r)}{I_0(R^0_\ast)},\label{eq4.23}
\\
\frac{\partial^2\sigma^0_\ast}{\partial r^2}(r)=&\frac{\alpha\bar{\sigma}}{\alpha+R^0_\ast P_0(R^0_\ast)}\frac{I_0(r)}{I_0(R^0_\ast)}(1-P_0(r)),\label{eq4.24}
\end{align}
from which it follows that
\begin{equation}
\label{eq4.15}
\lambda=\frac{\alpha\bar{\sigma}}{\alpha+R^0_\ast P_0(R^0_\ast)}[1-P_0(R^0_\ast)+\alpha R^0_\ast P_0(R^0_\ast)].
\end{equation}
Solving \eqref{a-53} gives
\begin{equation}
\label{a-57}
p^0_\ast(r)=\frac{\mu\tilde{\sigma}r^2}{4}-\frac{\mu\alpha\bar{\sigma}}{\alpha+R^0_\ast P_0(R^0_\ast)}\frac{I_0(r)}{I_0(R^0_\ast)}+\frac{\mu\alpha\bar{\sigma}}{\alpha+R^0_\ast P_0(R^0_\ast)}+\frac{1}{R^0_\ast}-\frac{\mu\tilde{\sigma}(R^0_\ast)^2}{4}.
\end{equation}
Substituting \eqref{a-51} into \eqref{eq4.13} and using \eqref{eq2.19}, we derive
\begin{equation}
\label{a-52}
\frac{\tilde{\sigma}}{2\bar{\sigma}}=\frac{\alpha P_0(R^0_\ast)}{\alpha+R^0_\ast P_0(R^0_\ast)}.
\end{equation}
Being similar to \eqref{eq3.4}, for any $\alpha>0$ and any $0<\tilde\sigma<\bar\sigma$, the equation \eqref{a-52} admits a unique positive root $R^0_\ast$.

In view of \eqref{eq2.1}, we compute from \eqref{a-54} that
\begin{equation}
\label{a-58}
\omega^0_n(r,t)=\frac{-\lambda}{\alpha+h_n(R^0_\ast)}\frac{I_n(r)}{I_n(R^0_\ast)}\rho^0_n(t),
\end{equation}
where
\begin{equation}
\label{eq4.17}
h_n(x)=\frac n{x}+xP_n(x),\quad x>0.
\end{equation}
To find $q^0_n$, let $\eta^0_n=q^0_n+\mu\omega^0_n$. Combining \eqref{a-54} and \eqref{a-55}, we then find that $\eta^0_n$ satisfies
\begin{equation*}
L_n\eta^0_n=0, \quad 0<r<R^0_\ast,
\end{equation*}
which implies
$$
\eta^0_n(r,t)=C_1(t)r^n,
$$
where the operator $L_n$ is defined by \eqref{Ln} and $C_1(t)$ is an unknown function to be determined later.
As a result,
\begin{equation}
\label{a-62}
q^0_n(r,t)=\eta^0_n(r,t)-\mu\omega^0_n(r,t)=C_1(t)r^n-\mu\omega^0_n(r,t).
\end{equation}
A combination of \eqref{a-58}, \eqref{a-62} and the boundary value condition in \eqref{a-55} gives
\begin{equation*}
C_1(t)=\frac{1}{(R^0_\ast)^n}\bigg[\frac{n^2-1}{(R^0_\ast)^2}-\frac{\mu\lambda}{\alpha+h_n(R^0_\ast)}\bigg]\rho^0_n(t),
\end{equation*}
and so
\begin{equation}
\label{a-63}
q^0_n(r,t)=-\mu\omega^0_n(r,t)+
\bigg[\frac{n^2-1}{(R^0_\ast)^2}-\frac{\mu\lambda}{\alpha+h_n(R^0_\ast)}\bigg]\frac{r^n}{(R^0_\ast)^n}\rho^0_n(t).
\end{equation}
According to \eqref{a-56}, before solving for $\rho^0_n(t)$, it is necessary to compute from \eqref{a-57} and \eqref{a-58}
that
\begin{align}
\frac{\partial p^0_\ast}{\partial r}(r)=&\frac{\mu\tilde{\sigma}r}{2}-\frac{\mu\alpha\bar{\sigma}}{\alpha+R^0_\ast P_0(R^0_\ast)}\frac{I_1(r)}{I_0(R^0_\ast)},\label{ab-1}
\\
\frac{\partial^2 p^0_\ast}{\partial r^2}(r)=&\frac{\mu\tilde{\sigma}}{2}-\frac{\mu\alpha\bar{\sigma}}{\alpha+R^0_\ast P_0(R^0_\ast)}\frac{I_0(r)}{I_0(R^0_\ast)}[1-P_0(r)],\label{eq4.19}
\\
\frac{\partial\omega^0_n}{\partial r}(r,t)=&\frac{-\lambda}{\alpha+h_n(R^0_\ast)}\frac{I_n(r)}{I_n(R^0_\ast)}h_n(r)\rho^0_n(t),
\label{eq4.18}
\end{align}
which, combined with \eqref{eq2.13}, \eqref{eq4.15}, \eqref{a-52}, \eqref{a-58} and \eqref{a-63}, implies
\begin{align}
\frac{\partial^2 p^0_\ast}{\partial r^2}(R^0_\ast)=&\frac{\mu\alpha\bar{\sigma}}{\alpha+R^0_\ast P_0(R^0_\ast)}
[2P_0(R^0_\ast)-1]=-\frac{\mu\alpha\bar{\sigma}}{\alpha+R^0_\ast P_0(R^0_\ast)}(R^0_\ast)^2P_0(R^0_\ast)P_1(R^0_\ast),
\label{a-60}
\\
\frac{\partial q^0_n}{\partial r}(R^0_\ast,t)=&\left[\frac{n(n^2-1)}{(R^0_\ast)^3}
+\mu\frac{\alpha\bar{\sigma}[1-P_0(R^0_\ast)+\alpha R^0_\ast P_0(R^0_\ast)]}{\alpha+R^0_\ast P_0(R^0_\ast)}\frac{R^0_\ast P_n(R^0_\ast)}{\alpha+h_n(R^0_\ast)}\right]\rho^0_n(t).
\label{eq4.20}
\end{align}
We insert \eqref{a-60}, \eqref{eq4.20} into \eqref{a-56} and use \eqref{eq2.13} again, to arrive at
\begin{align}
\frac{d\rho^0_n(t)}{dt}=&\left(-\frac{n(n^2-1)}{(R^0_\ast)^3}
+\mu\frac{\alpha\bar\sigma R^0_\ast P_0(R^0_\ast)}{\alpha+R^0_\ast P_0(R^0_\ast)}
\frac{nP_1(R^0_\ast)-P_n(R^0_\ast)+\alpha R^0_\ast[P_1(R^0_\ast)-P_n(R^0_\ast)]}{h_n(R^0_\ast)+\alpha}\right)\rho^0_n(t),\nonumber
\\
=&[-A_n(R^0_\ast)+\mu B_n(R^0_\ast,\alpha)]\rho^0_n(t),\label{eq4.10}
\end{align}
where
\begin{align}
&A_n(R^0_\ast)=\frac{n(n^2-1)}{(R^0_\ast)^3},\label{eq4.1}
\\
&B_n(R^0_\ast,\alpha)=\frac{\alpha\bar\sigma R^0_\ast P_0(R^0_\ast)}{\alpha+R^0_\ast P_0(R^0_\ast)}
\frac{nP_1(R^0_\ast)-P_n(R^0_\ast)+\alpha R^0_\ast[P_1(R^0_\ast)-P_n(R^0_\ast)]}{h_n(R^0_\ast)+\alpha}.
\label{eq4.2}
\end{align}
Therefore,
\begin{equation}
\label{a-66}
        \rho^0_n(t)=\rho^0_n(0)\exp\left\{[-A_n(R^0_\ast)+\mu B_n(R^0_\ast,\alpha)]t\right\}.
\end{equation}

We now proceed to analyze the asymptotic behavior of $\rho^0_n(t)$ as $t\to\infty$. Evidently, $A_0(R^0_\ast)=A_1(R^0_\ast)=0$, $A_n(R^0_\ast)>0$ for $n\ge2$, $B_1(R^0_\ast,\alpha)=0$ and \eqref{eq2.7} implies $B_0(R^0_\ast,\alpha)<0$, $B_n(R^0_\ast,\alpha)>0$ for $n\ge2$. As a result, we immediately get the next two propositions.

\begin{proposition}
\label{l1}
If $n=0$, then for any $\mu>0$, there exists a positive constant $\delta_1$ such that $|\rho_0^0(t)|\leq|\rho_0^0(0)|e^{-\delta_1 t}$ for all $t>0$.
\end{proposition}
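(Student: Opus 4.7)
The plan is to read off the conclusion directly from the explicit solution formula \eqref{a-66} that was just derived, so the main task is to verify that the exponent is strictly negative when $n=0$.

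First I would substitute $n=0$ into the definitions \eqref{eq4.1} and \eqref{eq4.2}. The expression \eqref{eq4.1} gives $A_0(R^0_\ast)=0$ immediately since the numerator contains the factor $n(n^2-1)$. For $B_0(R^0_\ast,\alpha)$, I would first simplify $h_0(R^0_\ast)=R^0_\ast P_0(R^0_\ast)$ using the definition \eqref{eq4.17}, which conveniently cancels with the prefactor denominator. The numerator of the second fraction in \eqref{eq4.2} becomes
\[
0\cdot P_1(R^0_\ast)-P_0(R^0_\ast)+\alpha R^0_\ast[P_1(R^0_\ast)-P_0(R^0_\ast)],
\]
and both terms here are strictly negative: $-P_0(R^0_\ast)<0$ trivially, and $P_1(R^0_\ast)-P_0(R^0_\ast)<0$ by the monotonicity property \eqref{eq2.7}. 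Hence $B_0(R^0_\ast,\alpha)<0$, as was already flagged in the paragraph preceding the proposition.

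Once this sign is established, the conclusion is immediate: set $\delta_1=-\mu B_0(R^0_\ast,\alpha)>0$, and \eqref{a-66} with $n=0$ reduces to
\[
\rho_0^0(t)=\rho_0^0(0)\exp\{\mu B_0(R^0_\ast,\alpha)t\}=\rho_0^0(0)e^{-\delta_1 t}.
\]
Taking absolute values yields the claimed bound for every $t>0$.

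There is no serious obstacle here: the entire content of the proposition is the sign analysis of $B_0$, which has already been carried out qualitatively in the discussion right before the statement. The only care needed is to track the $\alpha$- and $\bar\sigma$-prefactors (both positive) and to invoke \eqref{eq2.7} correctly; $R^0_\ast$ is positive by Theorem \ref{mainth-t1} and \eqref{a-52}, so no degeneracy arises. Accordingly, I would present the argument as a compact computation with a single sentence justifying the strict inequality from \eqref{eq2.7}.
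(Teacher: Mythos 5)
Your proposal is correct and matches the paper's own argument, which likewise reads off $A_0(R^0_\ast)=0$ and $B_0(R^0_\ast,\alpha)<0$ (the latter from \eqref{eq2.7}) and then concludes directly from the explicit formula \eqref{a-66} with $\delta_1=-\mu B_0(R^0_\ast,\alpha)$. Your explicit sign computation of the numerator $-P_0(R^0_\ast)+\alpha R^0_\ast[P_1(R^0_\ast)-P_0(R^0_\ast)]$ simply spells out what the paper labels ``evident,'' so there is nothing to add or correct.
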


\begin{proposition}
\label{l2}
If $n=1$, then for any $\mu>0$, $\rho_1^0(t)=\rho_1^0(0)$ for all $t>0$.
\end{proposition}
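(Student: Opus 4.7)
The plan is to read off the result directly from the explicit formula \eqref{a-66}, which was derived just above the statement. That formula gives
$$\rho_1^0(t)=\rho_1^0(0)\exp\!\bigl\{[-A_1(R^0_\ast)+\mu B_1(R^0_\ast,\alpha)]t\bigr\},$$
so the entire task reduces to verifying that the exponent vanishes identically when $n=1$, regardless of $\mu$, $\alpha$, and the value of $R^0_\ast$.

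For the first coefficient, I would simply substitute $n=1$ into the definition \eqref{eq4.1} of $A_n$: the numerator $n(n^2-1)$ becomes $1\cdot(1-1)=0$, hence $A_1(R^0_\ast)=0$. For the second coefficient, I would substitute $n=1$ into \eqref{eq4.2}, noting that the factor $nP_1(R^0_\ast)-P_n(R^0_\ast)+\alpha R^0_\ast[P_1(R^0_\ast)-P_n(R^0_\ast)]$ in the numerator collapses to $P_1(R^0_\ast)-P_1(R^0_\ast)+\alpha R^0_\ast[P_1(R^0_\ast)-P_1(R^0_\ast)]=0$, so $B_1(R^0_\ast,\alpha)=0$ as well (the prefactor and the denominator $h_1(R^0_\ast)+\alpha$ are finite and nonzero by \eqref{eq2.13}, so there is no indeterminacy to worry about).

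With both $A_1(R^0_\ast)=0$ and $B_1(R^0_\ast,\alpha)=0$, the exponent in \eqref{a-66} is identically zero, which gives $\rho_1^0(t)=\rho_1^0(0)$ for every $t>0$ and every $\mu>0$, as claimed. There is essentially no obstacle here; the proposition is a direct algebraic consequence of the definitions of $A_n$ and $B_n$, and its content is really the degeneracy corresponding to translational invariance of the stationary tumor (the mode $n=1$ represents infinitesimal rigid translations), consistent with Remark \ref{rem-1}.
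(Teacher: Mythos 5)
Your proof is correct and follows exactly the paper's own route: the paper likewise observes that $A_1(R^0_\ast)=0$ and $B_1(R^0_\ast,\alpha)=0$ and then reads the conclusion off the explicit formula \eqref{a-66}. Nothing is missing.
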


\begin{remark}
\label{mainth-r5}
Proposition \ref{l1} is consistent with the result in Xu \cite{Xu-20}. As a matter of fact, the mode $n=0$ represents radially symmetric perturbations, because in this case,
$$
r=R_\ast+\varepsilon\rho_0(t)=R_\ast^0+\varepsilon\rho_0^0(t)+\tau(R_\ast^1+\varepsilon\rho_0^1(t))+O(\tau^2),
$$
and when $\tau$ is small, we do not expect the first-order terms to have a major contribution.
\end{remark}

If $n\geq2$, we denote
\begin{equation}
\label{a-69}
\mu^0_n(R^0_\ast,\alpha)=\frac{A_n(R^0_\ast)}{B_n(R^0_\ast,\alpha)},
\end{equation}
or equivalently, by \eqref{eq4.17}, \eqref{eq4.1}, \eqref{eq4.2},
\begin{equation}
\label{h-46}
\mu^0_n(R^0_\ast,\alpha)=\frac{\alpha+R^0_\ast P_0(R^0_\ast)}{\alpha\bar{\sigma}(R^0_\ast)^4 P_0(R^0_\ast)}\frac{n(n^2-1)\left[\frac{n}{R^0_\ast}+\alpha+R^0_\ast P_n(R^0_\ast)\right]}{(n+\alpha R^0_\ast)P_1(R^0_\ast)-(1+\alpha R^0_\ast)P_n(R^0_\ast)}.
\end{equation}

\begin{lemma}
\label{l4}
$\mu^0_n(R^0_\ast,\alpha)$ is monotone increasing in $n$ for all $n\ge2$.
\end{lemma}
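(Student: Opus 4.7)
The plan is to show $\mu^0_{n+1}(R^0_\ast,\alpha)>\mu^0_n(R^0_\ast,\alpha)$ for every integer $n\ge 2$. Writing $R=R^0_\ast$, $P_k=P_k(R)$, and $h_k=h_k(R)$ for brevity, I will first observe that $(n+\alpha R)P_1-(1+\alpha R)P_n=(n-1)P_1+(1+\alpha R)(P_1-P_n)>0$ by \eqref{eq2.7}, so both $B_n$ and $B_{n+1}$ are strictly positive. Using $A_n=n(n^2-1)/R^3$ together with the explicit form \eqref{eq4.2} of $B_n$, the $n$-independent positive factor $\alpha\bar\sigma RP_0/[\alpha+RP_0]$ cancels, and after clearing the positive denominators $h_n+\alpha$ and $h_{n+1}+\alpha$, the claim is equivalent to
\begin{equation*}
(n-1)(h_n+\alpha)\bigl[(n+1+\alpha R)P_1-(1+\alpha R)P_{n+1}\bigr]<(n+2)(h_{n+1}+\alpha)\bigl[(n+\alpha R)P_1-(1+\alpha R)P_n\bigr].
\end{equation*}

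Next, I will substitute $h_k=k/R+RP_k$ from \eqref{eq4.17}, multiply through by $R$, and arrive at a polynomial inequality in the three quantities $RP_1$, $R^2P_n$, $R^2P_{n+1}$ with coefficients depending on $n$ and on $\alpha R$. Grouping by powers of $\alpha R$ splits the inequality into a degree-zero, degree-one, and degree-two piece. The degree-zero piece is exactly the inequality that arises from the Dirichlet boundary problem $(\alpha=\infty$ after rescaling), and is established in \cite{HZH-17,ZH-1} via Lemma \ref{lem-2.1}, which supplies the sharp bounds $0<G_n(R)=R^2[P_1-P_n]<n-1$ and the monotonicity of $G_n$.

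For the degree-one and degree-two pieces, I plan to use the recurrence \eqref{eq2.13} in the form $R^2P_nP_{n+1}=1-2(n+1)P_n$ to eliminate every mixed product $P_nP_{n+1}$, and then regroup each remaining block into the shape $f_n(R)=S_1(n)P_{n+1}(R)-S_2(n)P_n(R)$ with coefficients $S_1(n),S_2(n)>0$. Lemma \ref{hl4} then reduces positivity of each block to verifying $f_n(0)>0$, and since $P_k(0)=1/(2k+2)$ by \eqref{eq2.13}, this last check becomes an elementary inequality in $n$ alone.

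The main obstacle will be the algebraic bookkeeping needed to regroup the expanded inequality into blocks that match the hypothesis of Lemma \ref{hl4} with transparently positive coefficients $S_1,S_2$. Identifying the correct regrouping — rather than performing any individual estimate — is the delicate point; once it is in hand, the remaining verification is a routine application of the Bessel-function identities \eqref{eq2.1}--\eqref{eq2.18} collected in Section~2, together with the monotonicity of $G_n$ from Lemma \ref{lem-2.1}.
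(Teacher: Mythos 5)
Your opening reduction coincides with the paper's: clearing the positive denominators in \eqref{h-46} and grouping by powers of $\alpha$ leads to the inequality $Q_1R^0_\ast\,\alpha^2+Q_2\,\alpha+Q_3/R^0_\ast>0$ of \eqref{h-50}, and it suffices to prove each coefficient positive. The gap lies in your plan for the coefficients. First, you have the identification backwards: the Dirichlet limit $\alpha=\infty$ corresponds to the \emph{degree-two} coefficient $Q_1=3P_1-(n+2)P_n+(n-1)P_{n+1}$, not to the degree-zero piece; the constant-in-$\alpha$ piece $Q_3$ is the $\alpha\to0$ coefficient and contains the mixed products $P_1P_n$, $P_1P_{n+1}$, $P_nP_{n+1}$, so it is not "exactly the inequality established in the Dirichlet papers." Note also that $Q_1$ cannot be settled by your toolkit: it has no products to eliminate, it is not of the form $S_1(n)P_{n+1}-S_2(n)P_n$ because of the $+3P_1$ term, and bounding $P_1$ from below by $P_n$ via \eqref{eq2.7} gives the wrong sign; the paper has to invoke the nontrivial inequality (3.31) of \cite{HZH-19(2)} after rewriting $P_1$ through \eqref{eq2.13}.

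Second, and more seriously, the claim that after using $r^2P_nP_{n+1}=1-2(n+1)P_n$ everything regroups into blocks $S_1(n)P_{n+1}-S_2(n)P_n$ with \emph{constant} positive coefficients, so that Lemma \ref{hl4} plus a check at $r=0$ finishes, does not survive for the middle coefficient $Q_2$. That substitution only removes the consecutive product $P_nP_{n+1}$ (at the cost of a constant term $-3$), while the products $P_1P_n$ and $P_1P_{n+1}$ remain with $r$-dependent weights $r^2P_1(r)$, outside the hypotheses of Lemma \ref{hl4}; moreover, since $rP_k(r)\to1$ as $r\to\infty$ by \eqref{eq2.18}, the resulting expression is asymptotically borderline, so no argument that only verifies positivity at $r=0$ can close it. The paper's proof of $Q_2>0$ is genuinely more delicate: after one application of \eqref{eq2.7} it factors the remainder as $J_{1,n}+3r^2P_1P_nP_{n+1}J_{2,n}$ with $J_{2,n}=\frac{n+2}{3}P_n^{-1}-\frac{n-1}{3}P_{n+1}^{-1}-P_1^{-1}$, and proves $J_{2,n}>0$ by deriving from the Riccati relation \eqref{eq2.14} a differential identity $J'_{2,n}=W_1J_{2,n}+W_2$ with $W_2>0$ and running an integrating-factor (sign-preservation) argument. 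Nothing in your proposal anticipates this step, and "identifying the correct regrouping" for Lemma \ref{hl4} alone will not produce it; Lemma \ref{lem-2.1} on $G_n$ is likewise not used for this purpose in the paper (it only enters later, in Lemma \ref{lem-4.1}). So the proposal is incomplete exactly where the real work of the lemma lies.
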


\begin{proof}
According to \eqref{h-46}, the desired inequality
\begin{equation*}
%\label{eq4.3}
\mu^0_n(R^0_\ast,\alpha)<\mu^0_{n+1}(R^0_\ast,\alpha)
\end{equation*}
is equivalent to
\begin{equation*}
%\label{h-49}
    \begin{aligned}
&(n-1)\left[\frac{n}{R^0_\ast}+\alpha+R^0_\ast P_n(R^0_\ast)\right]
\left[(n+1+\alpha R^0_\ast)P_1(R^0_\ast)-(1+\alpha R^0_\ast)P_{n+1}(R^0_\ast)\right]
\\
<&(n+2)\left[\frac{n+1}{R^0_\ast}+\alpha+R^0_\ast P_{n+1}(R^0_\ast)\right]
\left[(n+\alpha R^0_\ast)P_1(R^0_\ast)-(1+\alpha R^0_\ast)P_n(R^0_\ast)\right].
   \end{aligned}
\end{equation*}
It is then enough to show that
\begin{equation}
\label{h-50}
Q_1(n, R^0_\ast)R^0_\ast\alpha^2+Q_2(n, R^0_\ast)\alpha+\frac{Q_3(n, R^0_\ast)}{R_\ast^0}>0,
\end{equation}
where
\begin{equation}
\label{eq4.3}
Q_1(n, R^0_\ast)=
3P_1(R^0_\ast)-(n+2)P_n(R^0_\ast)+(n-1)P_{n+1}(R^0_\ast),
\end{equation}
\begin{align*}
\begin{split}
Q_2(n, R^0_\ast)=
&(6n+3)P_1(R^0_\ast)-(n+2)^2P_n(R^0_\ast)+(n^2-1)P_{n+1}(R^0_\ast)
\\
&-(n-1)(R^0_\ast)^2P_1(R^0_\ast)P_n(R^0_\ast)+(n+2)(R^0_\ast)^2P_1(R^0_\ast)P_{n+1}(R^0_\ast)
\\
&-3(R^0_\ast)^2P_n(R^0_\ast)P_{n+1}(R^0_\ast),
\end{split}
\end{align*}
\begin{align}
Q_3(n,R^0_\ast)=
&3n(n+1)P_1(R^0_\ast)-(n+1)(n+2)P_n(R^0_\ast)+n(n-1)P_{n+1}(R^0_\ast)\nonumber
\\
&-(n^2-1)\left(R^0_\ast\right)^2P_1(R^0_\ast)P_n(R^0_\ast)
+n(n+2)\left(R^0_\ast\right)^2P_1(R^0_\ast)P_{n+1}(R^0_\ast)\nonumber
\\
&-3\left(R^0_\ast\right)^2P_n(R^0_\ast)P_{n+1}(R^0_\ast).\label{eq4.5}
\end{align}

In the sequel, we shall complete the proof of the lemma via the next three separate lemmas, which tell us that $Q_i(n, R^0_\ast)>0$ $(i=1,2,3)$ for every $n\ge2$, and therefore prove \eqref{h-50}.
\end{proof}

\begin{lemma}\label{hl5}
$Q_1(n, R^0_\ast)>0$ for $n\geq2$.
\end{lemma}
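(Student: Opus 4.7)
The strategy is to establish the stronger statement $Q_1(n, r) > 0$ for all $r > 0$ and $n \geq 2$ (of which Lemma \ref{hl5} is the special case $r = R^0_\ast$), by combining boundary asymptotics with a critical-point contradiction argument. At $r = 0$, using $P_m(0) = 1/(2m+2)$ from \eqref{eq2.13}, a direct computation gives $Q_1(n, 0) = (3n+4)(n-1)/[4(n+1)(n+2)] > 0$ for $n \geq 2$. As $r \to \infty$, the asymptotic \eqref{eq2.9} yields $P_m(r) = 1/r - (2m+1)/(2r^2) + O(r^{-3})$; crucially, the coefficients $3, -(n+2), n-1$ of $Q_1$ satisfy $3 - (n+2) + (n-1) = 0$, so the leading $1/r$ contributions cancel in $Q_1$, leaving $Q_1(n, r) = 2(n-1)/r^2 + O(r^{-3}) > 0$ for large $r$.

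Next, I would rule out zeros in the interior by contradiction. Suppose $r_0 > 0$ is the smallest zero of $Q_1(n, \cdot)$; then $Q_1'(n, r_0) \leq 0$ and $3P_1(r_0) = (n+2)P_n(r_0) - (n-1)P_{n+1}(r_0)$. Differentiating $Q_1$ and applying the Riccati identities \eqref{eq2.14} to each $P_m'$, the constant $1/r$ terms cancel by the same zero-sum property of the coefficients. Eliminating $P_1$ from the remainder via the relation---the quadratic piece simplifies through $9P_1^2 = [(n+2)P_n - (n-1)P_{n+1}]^2$ into $-(n-1)(n+2)(P_n - P_{n+1})^2/3$---one obtains
\[
Q_1'(n, r_0) = (n-1)\left\{\frac{2\,[(n+2)P_n(r_0) - nP_{n+1}(r_0)]}{r_0} - \frac{r_0(n+2)\,[P_n(r_0) - P_{n+1}(r_0)]^2}{3}\right\}.
\]

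The first braced term is positive because $P_n > P_{n+1}$ (from \eqref{eq2.7}) gives $(n+2)P_n > nP_{n+1}$. The main obstacle is to show that this positive term dominates the negative second term, forcing $Q_1'(n, r_0) > 0$ and contradicting $Q_1'(n, r_0) \leq 0$. My plan is to invoke the uniform bound $r^2[P_n(r) - P_{n+1}(r)] < 1$, i.e., $G_{n+1}(r) - G_n(r) < 1$ in the notation of Lemma \ref{lem-2.1}---a bound equivalent to the monotonicity $(rP_m)'(r) > 0$ that extends \eqref{eq2.17} to all $m \geq 0$ via the Tur\'an-type inequalities \eqref{eq2.5}--\eqref{eq2.5-1}. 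Granted this, $r_0^2[P_n(r_0) - P_{n+1}(r_0)]^2 < P_n(r_0) - P_{n+1}(r_0)$, so the required domination reduces to the inequality $5(n+2)P_n(r_0) > (5n-2)P_{n+1}(r_0)$, which follows at once from $P_n > P_{n+1}$ together with $5(n+2) > 5n-2$.
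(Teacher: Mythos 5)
Your route is genuinely different from the paper's: the paper proves \eqref{eq4.6} by substituting $P_1(x)=\frac{1}{x^2P_0(x)}-\frac{2}{x^2}$ (from \eqref{eq2.13}) and quoting inequality (3.31) of the cited reference, whereas you give a self-contained first-zero argument. Your boundary computations are correct ($Q_1(n,0)=\frac{(3n+4)(n-1)}{4(n+1)(n+2)}$, and $Q_1(n,r)=\frac{2(n-1)}{r^2}+O(r^{-3})$ from \eqref{eq2.9}), and I verified your identity for $Q_1'$ at a first zero using \eqref{eq2.14}. The gap is the auxiliary bound $r^2[P_n(r)-P_{n+1}(r)]<1$. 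You are right that, via \eqref{eq2.13}, it is equivalent to $1-(2n+1)P_n-r^2P_n^2>0$, i.e.\ to $(rP_n)'>0$; this is true, but it does \emph{not} follow from \eqref{eq2.5}--\eqref{eq2.5-1} in the way you suggest. Combining \eqref{eq2.2} with \eqref{eq2.5} (index $n$) gives only $1-2nP_n-r^2P_n^2>0$, i.e.\ $r^2(P_n-P_{n+1})<2$; \eqref{eq2.5-1} improves this by the term $4(n+1)P_n^2$, which exceeds the missing $P_n$ only where $P_n\ge\frac{1}{4(n+1)}$, hence only for small $r$. Moreover $r^2(P_n-P_{n+1})\to1$ as $r\to\infty$ (Lemma \ref{lem-2.1}), so the claimed bound is sharp and cannot come from such soft estimates, nor from Lemma \ref{lem-2.1} itself (differences of increasing functions need not stay below their limit). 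As written, this key step is unproven.

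The gap is repairable within your own framework, in two ways. (i) Easiest: the weaker bound $r^2(P_n-P_{n+1})<2$, which \emph{does} follow from \eqref{eq2.5}, \eqref{eq2.2}, \eqref{eq2.13} as above, already suffices: it gives
\begin{equation*}
\frac{r_0(n+2)\left[P_n(r_0)-P_{n+1}(r_0)\right]^2}{3}<\frac{2(n+2)\left[P_n(r_0)-P_{n+1}(r_0)\right]}{3r_0},
\end{equation*}
so the required domination reduces to $3[(n+2)P_n-nP_{n+1}]\ge(n+2)(P_n-P_{n+1})$, i.e.\ $2(n+2)P_n\ge(2n-2)P_{n+1}$, immediate from \eqref{eq2.7}. (ii) Alternatively, prove $(rP_n)'>0$ honestly by the same first-zero device you use for $Q_1$: set $v=rP_n$, so by \eqref{eq2.14} $v'=1-\frac{2n+1}{r}v-v^2$ with $v'(0^+)=\frac{1}{2n+2}>0$ by \eqref{eq2.13}; differentiating, $v''=\frac{2n+1}{r^2}v-\left(\frac{2n+1}{r}+2v\right)v'$, so at a first zero $r_1$ of $v'$ one would have $v''(r_1)=\frac{2n+1}{r_1^2}v(r_1)>0$, contradicting $v'>0$ on $(0,r_1)$ and $v'(r_1)=0$. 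With either repair your argument closes and in fact yields the stronger statement $Q_1(n,r)>0$ for all $r>0$.
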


\begin{proof}
By \eqref{eq4.3}, it suffices to show that
\begin{equation}
\label{eq4.6}
(n+2)P_n(x)-(n-1)P_{n+1}(x)-3P_1(x)<0,\quad x>0.
\end{equation}
Notice that \eqref{eq2.13} implies that
$$
P_1(x)=\frac1{x^2P_0(x)}-\frac2{x^2},\quad x>0.
$$
Substituting this into \eqref{eq4.6} yields
\begin{equation*}
(n+2)x^2P_n(x)-(n-1)x^2P_{n+1}(x)-\frac3{P_0(x)}+6<0,\quad x>0,
\end{equation*}
which has been established in \cite{HZH-19(2)} (see (3.31)). The proof is complete.
\end{proof}

\begin{lemma}
\label{hl6}
$Q_2(n, R^0_\ast)>0$ for $n\geq2$.
\end{lemma}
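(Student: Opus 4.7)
The strategy will parallel the proof of Lemma~\ref{hl5} for $Q_1$: convert $Q_2(n,R^0_\ast)>0$ into a claim about $P_1, P_n, P_{n+1}$ whose positivity is visible from the Bessel identities of Section~2. First, I would use \eqref{eq2.13}, in the form $(R^0_\ast)^2P_nP_{n+1}=1-2(n+1)P_n$, to eliminate the explicit product $-3(R^0_\ast)^2P_nP_{n+1}$. Combined with the algebraic identity
\begin{equation*}
(n+2)(R^0_\ast)^2 P_1 P_{n+1} - (n-1)(R^0_\ast)^2 P_1 P_n - 3(R^0_\ast)^2 P_n P_{n+1} = (n+2)(R^0_\ast)^2 P_{n+1}(P_1-P_n) - (n-1)(R^0_\ast)^2 P_n(P_1-P_{n+1}),
\end{equation*}
this recasts $Q_2$ as
\begin{equation*}
Q_2 = (6n+3)P_1 - (n+2)^2 P_n + (n^2-1)P_{n+1} + (n+2)(R^0_\ast)^2 P_{n+1}(P_1-P_n) - (n-1)(R^0_\ast)^2 P_n(P_1-P_{n+1}),
\end{equation*}
so that every residual $(R^0_\ast)^2$-factor multiplies a difference $P_1-P_m$ controlled by Lemma~\ref{lem-2.1}.

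Next I would bound the negative piece using $(R^0_\ast)^2(P_1-P_{n+1})<n$ from Lemma~\ref{lem-2.1}, while retaining the positive piece $(n+2)(R^0_\ast)^2P_{n+1}(P_1-P_n)\ge 0$ (non-negative since $P_1>P_n$ for $n\ge2$). A quick endpoint check shows that the positive term cannot be dropped: at $R^0_\ast=0$ the reduced claim reduces to the elementary polynomial inequality $4n^3+9n^2-n-12>0$ for $n\ge2$; but as $R^0_\ast\to\infty$, discarding the positive piece leaves a leading term of order $-(n-1)(n-2)/R^0_\ast$, which is negative for $n\ge3$ and must be compensated by the retained term, whose asymptotic contribution by Lemma~\ref{lem-2.1} and \eqref{eq2.18} is precisely $(n+2)(n-1)/R^0_\ast$, restoring the true leading behavior $4(n-1)/R^0_\ast>0$. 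The task then reduces to verifying
\begin{equation*}
(6n+3)P_1 - (2n^2+3n+4)P_n + (n^2-1)P_{n+1} + (n+2)(R^0_\ast)^2 P_{n+1}(P_1-P_n) > 0 \quad \text{for all } R^0_\ast>0,\; n\ge2.
\end{equation*}

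To close this residual inequality, I would isolate a two-term combination of the form $S_1(n)P_{n+1}-S_2(n)P_n$ via \eqref{eq2.13} and apply Lemma~\ref{hl4} to promote positivity at $R^0_\ast=0$ to all $R^0_\ast>0$, treating the remaining $P_1$-contribution and the positive $(n+2)(R^0_\ast)^2P_{n+1}(P_1-P_n)$ term as slack. The main obstacle is that in the intermediate regime of $R^0_\ast$ the two error terms above are simultaneously tight in opposite directions, so the crude application of Lemma~\ref{lem-2.1} alone may fail to close the estimate uniformly. To handle this, I would sharpen the treatment using the log-concavity refinement \eqref{eq2.5-1}, namely $P_{n-1}(1-2P_n)<P_n$, and ultimately aim to reduce the problem to a Bessel inequality of the type already established in~\cite{HZH-19(2)}, exactly as Lemma~\ref{hl5} invokes (3.31) of that reference.
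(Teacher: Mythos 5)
Your reductions are correct as far as they go: the algebraic regrouping of the quadratic terms, the bound $(R^0_\ast)^2\left[P_1(R^0_\ast)-P_{n+1}(R^0_\ast)\right]<n$ from Lemma~\ref{lem-2.1}, the endpoint value $4n^3+9n^2-n-12>0$ at $r=0$, and the asymptotic balance $4(n-1)/R^0_\ast$ all check out. But the proof stops exactly where the difficulty begins. Your residual inequality $(6n+3)P_1-(2n^2+3n+4)P_n+(n^2-1)P_{n+1}+(n+2)(R^0_\ast)^2P_{n+1}(P_1-P_n)>0$ cannot be handled by the tools you name in the way you describe: dropping or crudely bounding the last term fails for $n\ge3$ at large $r$ (as your own asymptotic check shows), using only $P_1>P_n$ leaves $(n^2-1)P_{n+1}-(2n^2-3n+1)P_n$, which is already negative at $r=0$ for $n=2$, so Lemma~\ref{hl4} does not apply, and the concluding step --- ``sharpen with \eqref{eq2.5-1} and reduce to a Bessel inequality of the type in \cite{HZH-19(2)}'' --- is a hope rather than an argument; you neither identify the target inequality nor verify it covers the intermediate range of $r$ that you yourself flag as the obstruction. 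As it stands, the positivity for all $r>0$ of the reduced expression is asserted, not proved, so there is a genuine gap.

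For comparison, the paper closes this by a different grouping: after using $P_1>P_n$ only on the term $(6n+3)P_1$, it writes the lower bound as $J_{1,n}+3(R^0_\ast)^2P_1P_nP_{n+1}J_{2,n}$ with $J_{1,n}=(n^2-1)P_{n+1}-(n-1)^2P_n$ and $J_{2,n}=\frac{n+2}{3}\frac{1}{P_n}-\frac{n-1}{3}\frac{1}{P_{n+1}}-\frac{1}{P_1}$. Positivity of $J_{1,n}$ follows from Lemma~\ref{hl4} after checking the value at $r=0$, while positivity of $J_{2,n}$ is obtained by a separate dynamic argument: the Riccati-type identity \eqref{eq2.14} yields a first-order linear ODE $J_{2,n}'=W_1J_{2,n}+W_2$ with forcing $W_2>0$ (again via Lemma~\ref{hl4} and \eqref{eq2.7}), and the variation-of-constants formula propagates the positivity at $r=0$ to all $r>0$. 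The essential idea you are missing is precisely this factorization into a two-term combination plus a product term whose bracket involves the reciprocals $1/P_m$, which is what makes a uniform-in-$r$ argument possible; if you want to salvage your route, you would need an analogous mechanism (an ODE or monotonicity argument valid for all $r$), not just endpoint checks at $r=0$ and $r=\infty$.
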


\begin{proof}
Using \eqref{eq2.7}, we have
\begin{align*}
Q_2(n, R^0_\ast)>&(6n+3)P_n(R^0_\ast)-(n+2)^2P_n(R^0_\ast)+(n^2-1)P_{n+1}(R^0_\ast)
\\
&-(n-1)(R^0_\ast)^2P_1(R^0_\ast)P_n(R^0_\ast)
+(n+2)(R^0_\ast)^2P_1(R^0_\ast)P_{n+1}(R^0_\ast)
\\
&-3(R^0_\ast)^2P_n(R^0_\ast)P_{n+1}(R^0_\ast)
\\
=&J_{1,n}(R^0_\ast)+3(R^0_\ast)^2P_1(R^0_\ast)P_n(R^0_\ast)P_{n+1}(R^0_\ast)J_{2,n}(R^0_\ast),
\end{align*}
where
\begin{align*}
J_{1,n}(x)=&(n^2-1)P_{n+1}(x)-(n^2-2n+1)P_n(x),\quad x>0,
\\
J_{2,n}(x)=&\frac{n+2}{3}\frac{1}{P_n(x)}-\frac{n-1}{3}\frac{1}{P_{n+1}(x)}-
\frac{1}{P_1(x)},\quad x>0.
\end{align*}
Since by \eqref{eq2.13},
\begin{align*}
(n^2-1)P_{n+1}(0)-(n^2-2n+1)P_n(0)=\frac{(n-1)(n+3)}{2(n+1)(n+2)}>0\quad{\rm for}~n\ge2,
\end{align*}
applying Lemma \ref{hl4} yields that $J_{1,n}(R^0_\ast)>0$.
We now claim that
\begin{equation}
\label{h-58}
J_{2,n}(x)>0\quad{\rm for}~n\ge2~{\rm and}~x>0.
\end{equation}
In fact, we first apply \eqref{eq2.13} to get $J_{2,n}(0)>0$, which together with its continuity implies that $J_{2,n}(x)>0$ on some interval $[0,\tau_n]$.
Next, an elementary calculation based on \eqref{eq2.14} gives that
$$
J'_{2,n}(x)=W_1(n,x)J_{2,n}(x)+W_2(n,x),
$$
where
\begin{align*}
W_1(n,x)=&\frac1xJ_{2,n}(x)-\frac2x\left[\frac{n+2}3\frac1{P_n(x)}-\frac{n-1}3\frac1{P_{n+1}(x)}\right]+\frac4x,
\\
W_2(n,x)=&\frac{n+2}3\frac{n-1}3\frac1x\left[\frac1{P_n(x)}-\frac1{P_{n+1}(x)}\right]^2
+\frac{2(n-1)}3\frac1x\frac{(n+2)P_{n+1}(x)-nP_n(x)}{P_n(x)P_{n+1}(x)}.
\end{align*}
Noticing that
$$
(n+2)P_{n+1}(0)-nP_n(0)>0,
$$
using Lemma \ref{hl4} and \eqref{eq2.7}, we obtain $W_2(n,x)>0$ for every $x>0$ and every $n\ge2$.
Combining with the fact that
\begin{equation*}
J_{2,n}(x)=\exp\left\{\int^{x}_{\tau_n}W_1(n,t)dt\right\}\left(J_{2,n}(\tau_n)+\int^{x}_{\tau_n}
\exp\left\{\int^{t}_{\tau_n}-W_1(n,s)ds\right\}W_2(n,t)dt\right),\quad x>\tau_n,
\end{equation*}
we arrive at the assertion \eqref{h-58} and therefore complete the proof of this lemma.
\end{proof}

\begin{lemma}\label{hl7}
$Q_3(n,R^0_\ast)>0$ for $n\geq2$.
\end{lemma}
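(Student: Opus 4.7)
The plan is to mirror the strategy of Lemma \ref{hl6}. First, I apply $P_1(R^0_\ast) > P_n(R^0_\ast)$ from \eqref{eq2.7} to the leading $3n(n+1)P_1(R^0_\ast)$ term in the expression \eqref{eq4.5} for $Q_3$. The pure $P$-terms then combine to give $(n-1)[2(n+1)P_n(R^0_\ast) + nP_{n+1}(R^0_\ast)]$, while factoring $3(R^0_\ast)^2 P_1 P_n P_{n+1}$ out of the product part yields
\[
Q_3(n,R^0_\ast) > (n-1)\bigl[2(n+1)P_n(R^0_\ast) + nP_{n+1}(R^0_\ast)\bigr] + 3(R^0_\ast)^2 P_1(R^0_\ast)P_n(R^0_\ast)P_{n+1}(R^0_\ast) \cdot K_{2,n}(R^0_\ast),
\]
where
\[
K_{2,n}(x) := \frac{n(n+2)}{3 P_n(x)} - \frac{(n-1)(n+1)}{3 P_{n+1}(x)} - \frac{1}{P_1(x)}.
\]
For $n \ge 2$ the first bracket is manifestly positive, so the task reduces to showing $K_{2,n}(x) > 0$ for all $x > 0$ and $n \ge 2$, which is exactly analogous to the role played by $J_{2,n}$ in Lemma \ref{hl6}.

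Next, using $P_m(0) = 1/(2m+2)$ from \eqref{eq2.13}, I evaluate
\[
K_{2,n}(0) = \frac{n(n+2)\cdot 2(n+1)}{3} - \frac{(n-1)(n+1)\cdot 2(n+2)}{3} - 4 = \frac{2(n+1)(n+2)}{3} - 4 \ge 4 > 0 \quad \text{for } n \ge 2,
\]
so by continuity $K_{2,n}(x) > 0$ on some maximal interval $[0,\tau'_n)$. To extend this to all of $(0,\infty)$, I would derive from \eqref{eq2.14} an ODE of the form
\[
K'_{2,n}(x) = \widetilde W_1(n,x)\, K_{2,n}(x) + \widetilde W_2(n,x),
\]
and then apply the integral representation
\[
K_{2,n}(x) = \exp\!\Bigl(\int_{\tau'_n}^x \widetilde W_1(n,t)\,dt\Bigr)\Bigl(K_{2,n}(\tau'_n) + \int_{\tau'_n}^x \exp\!\Bigl(\int_{\tau'_n}^t -\widetilde W_1(n,s)\,ds\Bigr) \widetilde W_2(n,t)\,dt\Bigr), \quad x>\tau'_n,
\]
exactly as in Lemma \ref{hl6}. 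Provided $\widetilde W_2(n,x) > 0$ for all $x>0$ and $n \ge 2$, this forces $K_{2,n}(x) > 0$ everywhere.

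The main obstacle is thus verifying $\widetilde W_2(n,x) > 0$. Relative to the analogous quantity $W_2$ in Lemma \ref{hl6}, the coefficients of $1/P_n$ and $1/P_{n+1}$ in $K_{2,n}$ are larger, so the derivation produces (i) an extra linear-in-$x$ contribution $\tfrac{2(n-1)}{3}x$, which is nonnegative for $n \ge 2$, and (ii) additional cross terms stemming from the identity $1/P_m(x) = x^2 P_{m+1}(x) + 2(m+1)$. I would reorganize $\widetilde W_2$ as the sum of the perfect square $\tfrac{1}{x}\bigl[\tfrac{n(n+2)}{3 P_n(x)} - \tfrac{(n-1)(n+1)}{3 P_{n+1}(x)}\bigr]^2$ plus multiples of combinations $S_1 P_{n+1}(x) - S_2 P_n(x)$ whose positivity at $x = 0$ (a direct check using $P_m(0) = 1/(2m+2)$) propagates to all $x > 0$ via Lemma \ref{hl4}, combined with \eqref{eq2.7} to control the remaining sign-indefinite pieces. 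Carrying out this bookkeeping carefully, in direct analogy with but slightly more intricate than the $W_2$ estimate in Lemma \ref{hl6}, completes the proof.
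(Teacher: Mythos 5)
Your reduction is algebraically correct up to a point: applying $P_1(R^0_\ast)>P_n(R^0_\ast)$ only to the term $3n(n+1)P_1$ does give
\[
Q_3>(n-1)\bigl[2(n+1)P_n+nP_{n+1}\bigr]+3(R^0_\ast)^2P_1P_nP_{n+1}\,K_{2,n}(R^0_\ast),
\qquad
K_{2,n}=\frac{n(n+2)}{3P_n}-\frac{(n^2-1)}{3P_{n+1}}-\frac1{P_1},
\]
and your value $K_{2,n}(0)=\frac{2(n+1)(n+2)}{3}-4\ge4$ is right. But from there the proposal is a plan, not a proof: everything hinges on $K_{2,n}(x)>0$ for all $x>0$, and you only assert that the ODE device of Lemma \ref{hl6} ``would'' deliver it, ``provided $\widetilde W_2(n,x)>0$'', with the decisive bookkeeping left undone. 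This is not a routine transfer. In Lemma \ref{hl6} the coefficients satisfy $\frac{n+2}3-\frac{n-1}3-1=0$, which is exactly what makes the $x$-terms cancel and gives $W_2$ its clean two-piece structure (a square with coefficient $\frac{(n+2)(n-1)}9$ in $\bigl[\frac1{P_n}-\frac1{P_{n+1}}\bigr]^2$, plus a two-term combination handled by Lemma \ref{hl4}); for your $K_{2,n}$ the analogous sum is $\frac{2(n-1)}3\neq0$, the cross terms come out with different coefficients, and your guessed square $\frac1x\bigl[\frac{n(n+2)}{3P_n}-\frac{(n^2-1)}{3P_{n+1}}\bigr]^2$ does not match the structure that the computation actually produces. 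Note also that $K_{2,n}$ genuinely involves three distinct functions $P_1,P_n,P_{n+1}$ (equivalently, via $1/P_m=x^2P_{m+1}+2(m+1)$, a three-term combination $n(n+2)P_{n+1}-(n^2-1)P_{n+2}-3P_2$ plus a constant), and Lemma \ref{hl4} only controls two-term combinations $S_1P_{n+1}-S_2P_n$; so the ``remaining sign-indefinite pieces'' are precisely where the difficulty lies, and they are not dispatched. As it stands, the lemma has been reduced to an unproved inequality.

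The paper avoids this entirely by a second, cheaper use of \eqref{eq2.7}: besides replacing $3n(n+1)P_1$ by $3n(n+1)P_n$, it bounds the negative product term $-3(R^0_\ast)^2P_nP_{n+1}$ from below by $-3(R^0_\ast)^2P_1P_{n+1}$. Then all product terms carry the common factor $(R^0_\ast)^2P_1$ and collapse to $(n-1)(R^0_\ast)^2P_1\bigl[(n+3)P_{n+1}-(n+1)P_n\bigr]$, and a single application of Lemma \ref{hl4} (using $(n+3)P_{n+1}(0)-(n+1)P_n(0)=\frac1{2(n+2)}>0$) finishes the proof — no ODE argument is needed for $Q_3$. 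If you want to keep your route, you must actually prove $K_{2,n}>0$ on $(0,\infty)$; otherwise, redistribute the $P_1>P_n$ estimate as the paper does.
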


\begin{proof}
\eqref{eq2.7} implies $P_1(R^0_\ast)>P_n(R^0_\ast)$ and we thus obtain from \eqref{eq4.5} that
\begin{align*}
Q_3(n,R^0_\ast)>&3n(n+1)P_n(R^0_\ast)-(n+1)(n+2)P_n(R^0_\ast)+n(n-1)P_{n+1}(R^0_\ast)
\\
&-(n^2-1)\left(R^0_\ast\right)^2P_1(R^0_\ast)P_n(R^0_\ast)
+n(n+2)\left(R^0_\ast\right)^2P_1(R^0_\ast)P_{n+1}(R^0_\ast)
\\
&-3\left(R^0_\ast\right)^2P_1(R^0_\ast)P_{n+1}(R^0_\ast)
\\
=&2(n^2-1)P_n(R^0_\ast)+n(n-1)P_{n+1}(R^0_\ast)
\\
&+(n-1)\left(R^0_\ast\right)^2 P_1(R^0_\ast)[(n+3)P_{n+1}(R^0_\ast)-(n+1)P_n(R^0_\ast)].
\end{align*}
Since
\begin{equation*}
(n+3)P_{n+1}(0)-(n+1)P_n(0)=\frac1{2(n+2)}>0,
\end{equation*}
by Lemma \ref{hl4}, we conclude $Q_3(n,R^0_\ast)>0$ for $n\geq2$. The proof is complete.
\end{proof}

Since Lemmas \ref{l1} and \ref{l2} are valid for all $\mu$, we define $\mu^0_0(R^0_\ast,\alpha)=\mu^0_1(R^0_\ast,\alpha)=\infty$. Set
\begin{equation}
\label{a-71}
\mu_\ast(R^0_\ast,\alpha)=\min\{\mu^0_0(R^0_\ast,\alpha),~ \mu^0_1(R^0_\ast,\alpha),~ \mu^0_2(R^0_\ast,\alpha),~ \mu^0_3(R^0_\ast,\alpha),~ \cdots\}.
\end{equation}
It then follows from Lemma \ref{l4} that
\begin{equation}
\label{a-72}
\mu_\ast(R^0_\ast,\alpha)=\mu^0_2(R^0_\ast,\alpha).
\end{equation}
Furthermore, the following result holds.

\begin{proposition}\label{l5}
For $n\geq2$ and $0<\mu<\mu_\ast(R^0_\ast,\alpha)$, there exists a positive constant $\delta_2$, depending on $\mu$, $R^0_\ast$, $\alpha$ but being independent of $n$, such that
\begin{equation}
\label{a-73}
|\rho^0_n(t)|\le|\rho^0_n(0)|e^{-\delta_2 n^3t}\quad{\rm for~ all}~t>0.
\end{equation}
\end{proposition}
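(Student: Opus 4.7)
The plan is to exploit the explicit closed form \eqref{a-66}, which gives
$$|\rho^0_n(t)| = |\rho^0_n(0)|\exp\{-\kappa_n t\},\qquad \kappa_n := A_n(R^0_\ast) - \mu B_n(R^0_\ast,\alpha),$$
so that the claimed estimate \eqref{a-73} is equivalent to a quantitative lower bound of the form $\kappa_n \ge \delta_2 n^3$, with $\delta_2$ independent of $n\ge 2$.

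The key observation is that, using the defining relation \eqref{a-69}, one can factor
$$\kappa_n = A_n(R^0_\ast)\left(1 - \frac{\mu}{\mu^0_n(R^0_\ast,\alpha)}\right) = \frac{n(n^2-1)}{(R^0_\ast)^3}\left(1 - \frac{\mu}{\mu^0_n(R^0_\ast,\alpha)}\right),$$
and hence
$$\frac{\kappa_n}{n^3} = \frac{n^2-1}{n^2(R^0_\ast)^3}\left(1 - \frac{\mu}{\mu^0_n(R^0_\ast,\alpha)}\right).$$
Now both factors on the right-hand side are nondecreasing in $n\ge 2$: the first one equals $(1-1/n^2)/(R^0_\ast)^3$ and is visibly increasing in $n$; the second is increasing because Lemma \ref{l4} tells us that $\mu^0_n(R^0_\ast,\alpha)$ is monotone increasing in $n$. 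Consequently, the infimum of $\kappa_n/n^3$ over $n\ge 2$ is attained at $n=2$, and by \eqref{a-72} together with the standing hypothesis $0<\mu<\mu_\ast(R^0_\ast,\alpha)=\mu^0_2(R^0_\ast,\alpha)$, this infimum is strictly positive:
$$\frac{\kappa_n}{n^3}\ge \frac{3}{4(R^0_\ast)^3}\left(1-\frac{\mu}{\mu_\ast(R^0_\ast,\alpha)}\right) =: \delta_2 > 0.$$
The constant $\delta_2$ depends only on $\mu$, $R^0_\ast$ and $\alpha$ (through $\mu_\ast$), and is manifestly $n$-independent. Substituting into \eqref{a-66} yields \eqref{a-73}.

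There is essentially no technical obstacle: the nontrivial monotonicity input needed for the uniform lower bound on $1-\mu/\mu^0_n$ was already done by Lemma \ref{l4}, while the $n^3$ rate comes for free from the leading $n^3$ growth of $A_n(R^0_\ast) = n(n^2-1)/(R^0_\ast)^3$. Had Lemma \ref{l4} not been available, one would instead have had to establish a uniform upper bound for $B_n(R^0_\ast,\alpha)$ directly (using that $P_n(R^0_\ast)\downarrow 0$ by \eqref{eq2.7} and $h_n(R^0_\ast)\ge n/R^0_\ast$, so that the second factor in \eqref{eq4.2} is $O(1)$ in $n$) and then combine with positivity of $\kappa_n$ at each finite $n$ — a longer route to the same conclusion.
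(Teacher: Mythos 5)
Your proof is correct and follows essentially the same route as the paper: both start from the explicit formula \eqref{a-66}, factor the decay rate as $A_n(R^0_\ast)\bigl(1-\mu/\mu^0_n\bigr)$, and arrive at the identical constant $\delta_2=\frac34\bigl(1-\mu/\mu_\ast\bigr)(R^0_\ast)^{-3}$. The only cosmetic difference is that the paper bounds $1-\mu/\mu^0_n\ge 1-\mu/\mu_\ast$ directly from the definition \eqref{a-71} and then uses $n^3-n\ge\frac34 n^3$ for $n\ge2$, whereas you minimize $\kappa_n/n^3$ over $n\ge2$ via the monotonicity from Lemma \ref{l4}.
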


\begin{proof}
By \eqref{eq4.1}, \eqref{a-66}, \eqref{a-69} and \eqref{a-71}, we find
\begin{equation*}
|\rho^0_n(t)|\le|\rho^0_n(0)|\exp\left\{-\left(1-\frac{\mu}{\mu_\ast(R^0_\ast,\alpha)}\right)
\frac{n(n^2-1)}{(R^0_\ast)^3}t\right\}.
\end{equation*}
Noticing that
\begin{equation}
\label{eq4.38}
n^3-n\ge\frac34n^3\quad{\rm for}~n\ge2,
\end{equation}
we further derive
\begin{equation*}
|\rho^0_n(t)|\le|\rho^0_n(0)|\exp\left\{-\frac34\left(1-\frac{\mu}{\mu_\ast(R^0_\ast,\alpha)}\right)
\frac1{(R^0_\ast)^3}n^3t\right\}.
\end{equation*}
Taking
\begin{equation}
\label{delta}
\delta_2=\frac34\left(1-\frac{\mu}{\mu_\ast(R^0_\ast,\alpha)}\right)
\frac1{(R^0_\ast)^3},
\end{equation}
the desired result \eqref{a-73} follows.
\end{proof}

\begin{proof}[\indent\it\bfseries Proof of Theorem \ref{mainth-t2}]
Using Propositions \ref{l1}, \ref{l2} and \ref{l5}, we can derive \eqref{m-20} by an argument quite similar to that in the proof of Theorem 1.1 of \cite{HZH-19(2)}. On the other hand, the linear instability in the case $\mu>\mu_\ast$ follows by taking $n=2$ in \eqref{a-66}. The proof is complete.
\end{proof}

\begin{remark}
\label{rem-4.1}
We now find that when the time delay $\tau$ is sufficiently small and the tumor proliferation intensity
$\mu$ is smaller than the threshold value $\mu_\ast$, the radially symmetric stationary solution
$(\sigma_\ast,p_\ast,R_\ast)$ is linearly stable even under non-radial perturbations, while when $\mu$ is larger than $\mu_\ast$, being different from that in \cite{Xu-20}, $(\sigma_\ast,p_\ast,R_\ast)$ is no longer stable under non-radial perturbations and the unstable mode comes from mode $2$.
\end{remark}

Before proceeding further, since the threshold value $\mu_\ast$ depends on the parameter $\alpha$, which represents the rate of angiogenesis, it would be interesting to find out the effect of angiogenesis on $\mu_\ast$. Precisely speaking, we get the following result.

\begin{lemma}
\label{lem-4.1}
Let $R^0_\ast>R^{\#}$, where $R^{\#}\approx2.412305$. Then $\mu_\ast(R^0_\ast,\alpha)$ is monotone decreasing with respect to $\alpha$ for $\alpha>0$.
\end{lemma}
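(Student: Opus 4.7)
The plan is to verify $\partial_\alpha \mu_\ast(R^0_\ast, \alpha) < 0$ directly from the closed-form expression, using that $\mu_\ast = \mu^0_2$ by \eqref{a-72}. Abbreviating $R = R^0_\ast$ and $P_n = P_n(R)$, I rewrite the formula \eqref{h-46} at $n = 2$ as
$$
\mu^0_2(R, \alpha) = \frac{6}{\bar\sigma R^4 P_0} \cdot \frac{N(\alpha)}{D(\alpha)},
$$
where $N(\alpha) = \alpha^2 + a_0 \alpha + b_0$ with $a_0 = \frac{2}{R} + R(P_0 + P_2)$ and $b_0 = 2P_0 + R^2 P_0 P_2$, and $D(\alpha) = R(P_1 - P_2)\alpha^2 + (2P_1 - P_2)\alpha$. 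The prefactor is positive and $\alpha$-independent, so the sign of $\partial_\alpha \mu^0_2$ coincides with that of $N'(\alpha) D(\alpha) - N(\alpha) D'(\alpha)$.

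A direct expansion and collection of powers of $\alpha$ yields
$$
N' D - N D' = \Phi(R)\,\alpha^2 \;-\; 2 b_0 R(P_1 - P_2)\,\alpha \;-\; b_0 (2P_1 - P_2),
$$
where
$$
\Phi(R) := P_2(R) - G_2(R)\bigl[P_0(R) + P_2(R)\bigr], \qquad G_2(R) = R^2[P_1(R) - P_2(R)]
$$
is built from the function of Lemma \ref{lem-2.1}. The coefficient $-2 b_0 R(P_1 - P_2)$ is strictly negative since \eqref{eq2.7} gives $P_1 > P_2 > 0$ and $b_0 > 0$; the constant term $-b_0 (2P_1 - P_2)$ is likewise strictly negative because $2P_1 - P_2 > P_1 > 0$. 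Hence it suffices to prove that $\Phi(R) \le 0$ whenever $R \ge R^\#$.

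The endpoint behaviour of $\Phi$ is easy to pin down: $\Phi(0) = P_2(0) = 1/6 > 0$ because $G_2(0) = 0$ by \eqref{eq2.11}; and combining the asymptotic \eqref{eq2.9} with \eqref{eq2.18} one checks $\Phi(R) = -1/R + O(R^{-2})$ as $R \to \infty$, so $\Phi$ is eventually negative. Consequently $\Phi$ has at least one positive root, and I would define $R^\#$ to be the smallest such root; numerical evaluation of the Bessel series \eqref{eq2.16} then locates $R^\# \approx 2.412305$.

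The main obstacle is establishing $\Phi(R) < 0$ for \emph{every} $R > R^\#$, not merely in a neighbourhood of $R^\#$ or in the limit $R \to \infty$. The cleanest route is to prove $\Phi$ has a unique positive root, reducing to a sign-of-derivative statement: differentiating $\Phi$ and using \eqref{eq2.14} to eliminate $P_n'$ produces a rational expression in $R$ and $P_0, P_1, P_2, P_3$, whose sign should be controllable by applying Lemma \ref{hl4} to an appropriate combination $S_1 P_{n+1} - S_2 P_n$ together with the cross-term bounds \eqref{eq2.5} and \eqref{eq2.5-1}. This Bessel-function estimate is the technical heart of the argument and the only step not already prepared by the preliminaries in Section 2.
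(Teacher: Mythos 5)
Your algebraic reduction coincides with the paper's own: after writing $\mu_\ast=\mu^0_2$ via \eqref{a-72} and \eqref{h-46} as $\frac{6}{\bar\sigma R^4P_0}\,N(\alpha)/D(\alpha)$, the quantity $N'D-ND'$ you expand is exactly the paper's quadratic $E(R^0_\ast,\alpha)=E_0+E_1\alpha+E_2\alpha^2$; your constant and linear coefficients are $E_0$ and $E_1$, your $\Phi$ is $E_2=P_2-G_2\,(P_0+P_2)$, and the strict negativity of the two lower-order coefficients via \eqref{eq2.7} is the same observation. The genuine gap is that the actual content of the lemma --- $\Phi(R)<0$ for \emph{every} $R>R^\#$ --- is never established. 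You verify only the endpoint behaviour ($\Phi(0)=1/6>0$ and $\Phi(R)\sim-1/R$ as $R\to\infty$) and then \emph{define} $R^\#$ as the smallest positive root; that yields negativity only on some right neighbourhood of $R^\#$ unless the root is shown to be unique, and your plan for uniqueness (differentiate $\Phi$, eliminate $P_n'$ via \eqref{eq2.14}, control the sign with Lemma \ref{hl4} and \eqref{eq2.5}--\eqref{eq2.5-1}) is only asserted to ``be controllable''; it is not carried out, and it is not evident that the resulting expression in $P_0,P_1,P_2,P_3$ has a single sign.

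For comparison, the paper does not attempt an analytic uniqueness proof; it splits the half-line. Since $P_0>P_2$ gives $P_2/(P_0+P_2)<1/2$, and $G_2$ is strictly increasing by Lemma \ref{lem-2.1} with the numerically computed value $G_2(4)\approx0.5536>1/2$, one gets $\Phi(R)<0$ for all $R\ge4$; the remaining compact interval $(0,4)$ is then handled by a direct numerical (Matlab) computation showing a single sign change at $x_0\approx2.412305=R^\#$. Thus the unbounded region --- which your asymptotics alone do not cover --- is disposed of analytically by the monotonicity of $G_2$, and only a bounded interval is left to numerics. To complete your argument you would either have to execute the uniqueness-of-root estimate you sketch, or adopt this two-piece splitting.
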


\begin{proof}
By \eqref{h-46} and \eqref{a-72},
\begin{align*}
\mu_\ast(R^0_\ast,\alpha)=\frac6{\bar{\sigma}(R^0_\ast)^4 P_0(R^0_\ast)}\left(1+\frac{R^0_\ast P_0(R^0_\ast)}{\alpha}\right)\frac{\frac2{R^0_\ast}+R^0_\ast P_2(R^0_\ast)+\alpha}{2P_1(R^0_\ast)-P_2(R^0_\ast)+\alpha R^0_\ast(P_1(R^0_\ast)-P_2(R^0_\ast))}.
\end{align*}
By differentiation, we compute
\begin{equation*}
\frac{\partial\mu_\ast}{\partial\alpha}(R^0_\ast,\alpha)=\frac6{\bar{\sigma}(R^0_\ast)^4 P_0(R^0_\ast)}
\frac{E(R^0_\ast,\alpha)}{\alpha^2\left[2P_1(R^0_\ast)-P_2(R^0_\ast)+\alpha R^0_\ast(P_1(R^0_\ast)-P_2(R^0_\ast))\right]^2}
\end{equation*}
with
$$
E(x,\alpha)=E_0(x)+E_1(x)\alpha+E_2(x)\alpha^2\quad{\rm for}~x>0~{\rm and}~\alpha>0,
$$
\begin{align*}
E_0(x)&=-xP_0(x)\left(xP_2(x)+\frac2x\right)\left(2P_1(x)-P_2(x)\right),\quad x>0,
\\
E_1(x)&=-2xP_0(x)\left(2+x^2P_2(x)\right)\left(P_1(x)-P_2(x)\right),\quad x>0,
\\
E_2(x)&=\left(P_0(x)+P_2(x)\right)\left[\frac{P_2(x)}{P_0(x)+P_2(x)}-x^2\left(P_1(x)-P_2(x)\right)\right],\quad x>0.
\end{align*}
It is obvious from \eqref{eq2.7} that $E_0(x)<0$ and $E_1(x)<0$ for all $x>0$. To see the sign of the function $E_2(x)$, \eqref{eq2.7} implies
that $P_2(x)/(P_0(x)+P_2(x))<1/2$ for any $x>0$. Using Lemma \ref{lem-2.1} together with the fact $G_2(4)\approx0.553598$ by Matlab, we then know that $E_2(x)<0$ for $x\ge4$. Next, the computation based on Matlab again shows that $E_2>0$ in $(0,x_0)$, $E_2(x_0)=0$ and $E_2<0$ in $(x_0,4)$, where $x_0\approx2.412305$. Hence,
$$
\frac{\partial\mu_\ast}{\partial\alpha}(R^0_\ast,\alpha)<0\quad{\rm for}~R^0_\ast>R^{\#}~{\rm and}~\alpha>0,
$$
as desired.
\end{proof}

\begin{remark}
In comparison with the threshold value $\mu_\ast$ obtained in the work \cite{ZH-1}, which is formally the case $\alpha=\infty$, Lemma \ref{lem-4.1} tells us that when the stationary tumors are of the same size and large enough, our threshold value $\mu_\ast$ is larger and thus the corresponding stationary tumor is more linearly stable.
\end{remark}

The following lemma  gives the impact of angiogenesis on the size of the stationary tumor.

\begin{lemma}
\label{rem-4.2}
$R^0_\ast$ is monotone increasing in $\alpha$, and
$$
\lim_{\alpha\to0}R^0_\ast=0,\quad\lim_{\alpha\to\infty}R^0_\ast=R^0_{\ast,D},
$$
where the positive constant $R^0_{\ast,D}$ solves
$$
P_0(R^0_{\ast,D})=\frac{\tilde{\sigma}}{2\bar{\sigma}}.
$$
\end{lemma}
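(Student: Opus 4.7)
The plan is to work directly from the scalar equation \eqref{a-52}, which implicitly defines $R^0_\ast$ in terms of $\alpha$. First I will rearrange \eqref{a-52} so as to express $\alpha$ explicitly as a function of $R^0_\ast$ on the admissible range, and then invert the resulting monotone map.

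Starting from \eqref{a-52}, a short algebraic manipulation gives
\begin{equation*}
\alpha\left(P_0(R^0_\ast)-\frac{\tilde\sigma}{2\bar\sigma}\right)=\frac{\tilde\sigma}{2\bar\sigma}\,R^0_\ast P_0(R^0_\ast).
\end{equation*}
Because the right-hand side is strictly positive and $0<\tilde\sigma<\bar\sigma$, a solution $R^0_\ast$ must satisfy $P_0(R^0_\ast)>\tilde\sigma/(2\bar\sigma)$; together with \eqref{eq2.15} and the limits $P_0(0^+)=1/2$, $P_0(\infty)=0$ (cf.~\eqref{eq2.13} and \eqref{eq2.18}), this forces $R^0_\ast\in(0,R^0_{\ast,D})$, where $R^0_{\ast,D}$ is the unique positive root of $P_0(x)=\tilde\sigma/(2\bar\sigma)$. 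On this interval I can solve for $\alpha$:
\begin{equation*}
\alpha=g(R^0_\ast):=\frac{\tilde\sigma}{2\bar\sigma}\cdot\frac{R^0_\ast P_0(R^0_\ast)}{P_0(R^0_\ast)-\dfrac{\tilde\sigma}{2\bar\sigma}}.
\end{equation*}

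Next I will show that $g:(0,R^0_{\ast,D})\to(0,\infty)$ is a strictly increasing bijection. The numerator $R^0_\ast P_0(R^0_\ast)$ is strictly increasing in $R^0_\ast$ by \eqref{eq2.17}, and the denominator $P_0(R^0_\ast)-\tilde\sigma/(2\bar\sigma)$ is strictly decreasing and positive by \eqref{eq2.15}, so $g$ is strictly increasing as the ratio of a positive increasing function and a positive decreasing function. For the limits, as $R^0_\ast\to 0^+$ the numerator tends to $0$ while the denominator tends to $1/2-\tilde\sigma/(2\bar\sigma)>0$, hence $g(R^0_\ast)\to 0$; as $R^0_\ast\to R^0_{\ast,D}{}^-$ the numerator tends to $R^0_{\ast,D}P_0(R^0_{\ast,D})>0$ while the denominator tends to $0^+$, hence $g(R^0_\ast)\to+\infty$. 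Inverting, I conclude that $R^0_\ast$ is a strictly increasing function of $\alpha\in(0,\infty)$ with $\lim_{\alpha\to0}R^0_\ast=0$ and $\lim_{\alpha\to\infty}R^0_\ast=R^0_{\ast,D}$.

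The only non-routine ingredients are the monotonicity $\tfrac{d}{dr}(rP_0(r))>0$ and $P_0'<0$, both of which are already recorded in Section~2; modulo these, the argument is just a one-variable limit computation, so I do not expect any serious obstacle. An alternative, equally short route is the implicit function theorem applied to $F(R,0)=0$ in \eqref{eq3.4}: a direct computation gives $\partial_\alpha F=\bar\sigma R P_0^2(R)/(\alpha+RP_0(R))^2>0$ while the existence proof already noted $\partial_R F<0$, yielding $dR^0_\ast/d\alpha>0$; the limits would then be extracted from the same explicit form as above. I will use the explicit version since it also delivers the limits in one pass.
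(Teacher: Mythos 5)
Your proof is correct, and it takes a slightly different (and in fact more self-contained) route than the paper. The paper's argument is a one-line implicit differentiation of \eqref{a-52}: it records
$\frac{dR^0_\ast}{d\alpha}=\frac{R^0_\ast P_0^2(R^0_\ast)}{\alpha P_0^2(R^0_\ast)-\alpha^2P'_0(R^0_\ast)}>0$
(positivity from \eqref{eq2.15}) and then declares the two limits "evident" from \eqref{eq2.17}, \eqref{eq2.18} — the reader is left to extract them from \eqref{a-52}, e.g.\ by a monotone-limit argument. You instead solve \eqref{a-52} for $\alpha=g(R^0_\ast)$, observe that the constraint $P_0(R^0_\ast)>\tilde\sigma/(2\bar\sigma)$ confines $R^0_\ast$ to $(0,R^0_{\ast,D})$, and show $g$ is a strictly increasing continuous bijection of $(0,R^0_{\ast,D})$ onto $(0,\infty)$ using exactly the same ingredients (\eqref{eq2.13}, \eqref{eq2.15}, \eqref{eq2.17}, \eqref{eq2.18}). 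What your version buys is that monotonicity, both limits, and even the existence and uniqueness of $R^0_\ast$ for each $\alpha>0$ all come out of the single inversion, with no differentiation of the implicit relation and no separate limit argument; what the paper's version buys is brevity and an explicit formula for $dR^0_\ast/d\alpha$, which can be reused quantitatively. Your closing remark correctly identifies the paper's route as the implicit-function alternative, and your computation of $\partial_\alpha F$ there is also right, so either path is sound.
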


In fact, $R^0_{\ast,D}$ is exactly that of the model with the Dirichlet boundary condition \eqref{m-13-D}; see (4.52) in \cite{ZH-1}. This lemma is evident from $$
\frac{dR^0_\ast}{d\alpha}=\frac{R^0_\ast P_0^2(R^0_\ast)}{\alpha P_0^2(R^0_\ast)-\alpha^2P'_0(R^0_\ast)}>0,
$$
and \eqref{eq2.17}, \eqref{eq2.18}.

\subsection{Sign of $R^1_\ast$}
In this subsection, we would like to know how the time delay $\tau$  affects the size of the stationary tumor. Recalling that $R_\ast=R^0_\ast+\tau R^1_\ast+O(\tau^2)$ , we are thus now interested in the sign of $R^1_\ast$, for which the equation has been derived in Subsection $4.1$; see \eqref{a-81}.

\begin{proposition}
\label{mainth-a1}
$R^1_\ast>0$ and $R^1_\ast$ is monotone increasing in $\mu$.
\end{proposition}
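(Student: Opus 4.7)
The plan is to treat \eqref{a-81} as a linear algebraic equation for $R^1_\ast$ and to show it takes the form $A\,R^1_\ast = -\mu M$ with $A,\,M$ independent of $\mu$, $A<0$ and $M>0$; both claims then follow at once, since $R^1_\ast = \mu M/|A|$ is positive and linear (hence monotone increasing) in $\mu$.

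First I would solve \eqref{eq4.14} explicitly for $\sigma^1_\ast$. Since it satisfies the same radial modified Bessel equation as $\sigma^0_\ast$ and must be regular at $r=0$, one has $\sigma^1_\ast(r)=c_1 I_0(r)$, and the Robin boundary condition together with \eqref{eq2.3} pins down $c_1=-\lambda R^1_\ast/(I_1(R^0_\ast)+\alpha I_0(R^0_\ast))$. Using \eqref{eq2.19} and the identity $I_1(R^0_\ast)=R^0_\ast P_0(R^0_\ast)I_0(R^0_\ast)$, one obtains
\begin{equation*}
\int_0^{R^0_\ast}\sigma^1_\ast(r)\,r\,dr = -\,\frac{\lambda (R^0_\ast)^2 P_0(R^0_\ast)}{R^0_\ast P_0(R^0_\ast)+\alpha}\,R^1_\ast,
\end{equation*}
which is linear in $R^1_\ast$ with no dependence on $\mu$.

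The next and most important step is to rewrite the cross-term integral in \eqref{a-81} via integration by parts. From the identity $(r p^0_\ast{}')'=-\mu r(\sigma^0_\ast-\tilde\sigma)$ (which is just \eqref{a-53} in divergence form), together with $p^0_\ast{}'(R^0_\ast)=0$ from Remark \ref{rem-4} and $r p^0_\ast{}'(r)|_{r=0}=0$, one deduces
\begin{equation*}
\int_0^{R^0_\ast}\partial_r\sigma^0_\ast\,\partial_r p^0_\ast\, r\,dr = \mu\int_0^{R^0_\ast}\sigma^0_\ast\,(\sigma^0_\ast-\tilde\sigma)\,r\,dr.
\end{equation*}
Invoking \eqref{eq4.13} to subtract off the mean, the right-hand side collapses to $\mu\int_0^{R^0_\ast}(\sigma^0_\ast-\tilde\sigma)^2\,r\,dr =: \mu M>0$, so \eqref{a-81} reduces to $A\,R^1_\ast = -\mu M$, where
\begin{equation*}
A := [\sigma^0_\ast(R^0_\ast)-\tilde\sigma]\,R^0_\ast - \frac{\lambda (R^0_\ast)^2 P_0(R^0_\ast)}{R^0_\ast P_0(R^0_\ast)+\alpha}.
\end{equation*}

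It then only remains to show $A<0$, and this is the only step where a non-trivial Bessel identity enters. Substituting $\sigma^0_\ast(R^0_\ast)-\tilde\sigma = \tilde\sigma(1-2P_0(R^0_\ast))/(2P_0(R^0_\ast))$ (which follows from \eqref{a-51} and \eqref{a-52}) together with the formula \eqref{eq4.15} for $\lambda$, a direct simplification reduces the sign of $A$ to that of
\begin{equation*}
\alpha\left[1 - 2P_0(R^0_\ast) - (R^0_\ast)^2 P_0^2(R^0_\ast)\right] - R^0_\ast P_0^2(R^0_\ast).
\end{equation*}
Rearranging \eqref{eq2.13} with $n=0$ yields the key identity $1-2P_0(R)=R^2 P_0(R)P_1(R)$, which rewrites the bracket as $(R^0_\ast)^2 P_0(R^0_\ast)[P_1(R^0_\ast)-P_0(R^0_\ast)]$, strictly negative by \eqref{eq2.7}. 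Both summands of the displayed expression are therefore negative, so $A<0$ as desired. The main obstacle is precisely this sign analysis; the clean resolution hinges on the little identity $1-2P_0=R^2 P_0 P_1$, which turns an a priori sign-indefinite combination into a manifestly negative one.
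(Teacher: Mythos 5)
Your proof is correct, and it takes a genuinely different (and in one respect cleaner) route than the paper. The paper evaluates both integrals in \eqref{a-81} explicitly via the Bessel identities \eqref{eq2.19}--\eqref{eq2.21}, arrives at \eqref{eq4.22}, and then must prove the nontrivial inequality $P_0^2(r)<1/(4+r^2)$ (using \eqref{eq2.2} and \eqref{eq2.5-1}) to get $R^1_\ast>0$. You instead write \eqref{a-53} in divergence form, integrate the cross term by parts, and use \eqref{eq4.13} to collapse it to $\mu\int_0^{R^0_\ast}(\sigma^0_\ast-\tilde\sigma)^2 r\,dr>0$; this makes the positivity of the $\mu$-dependent side manifest with no Bessel asymptotics or product formulas, and in fact, comparing your identity with the paper's \eqref{eq4.22}, your computation yields the inequality $P_0^2(R^0_\ast)<1/(4+(R^0_\ast)^2)$ as a byproduct rather than needing it as an input. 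The remaining sign analysis of the coefficient of $R^1_\ast$ is handled in both arguments by the same two ingredients, namely $1-2P_0=r^2P_0P_1$ (the paper's \eqref{eq4.26}, i.e.\ \eqref{eq2.13} with $n=0$) and $P_0>P_1$ from \eqref{eq2.7}; I checked your algebra reducing the sign of $A$ to that of $\alpha[1-2P_0-(R^0_\ast)^2P_0^2]-R^0_\ast P_0^2$, and it agrees with the paper's coefficient $(P_0+\alpha R^0_\ast[P_0-P_1])P_0$ up to a positive factor. Monotonicity in $\mu$ is immediate in both treatments since $A$ and $M$ involve only $\sigma^0_\ast$, $R^0_\ast$, $\lambda$, which are $\mu$-independent. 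One small citation slip: the boundary fact you need is $(p^0_\ast)'(R^0_\ast)=0$, not Remark \ref{rem-4} (which concerns $p_\ast$); it follows either by integrating \eqref{a-53} against $r$ over $(0,R^0_\ast)$ and invoking \eqref{eq4.13}, or from \eqref{a-52} together with \eqref{ab-1}, as the paper itself notes in Subsection 4.4 --- a trivial repair, not a gap.
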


\begin{proof}
In view of \eqref{a-81}, we first compute from \eqref{eq4.14} that
\begin{equation}
\label{a-80}
\sigma^1_\ast(r)=-\frac{\lambda R^1_\ast}{\alpha+R^0_\ast P_0(R^0_\ast)}\frac{I_0(r)}{I_0(R^0_\ast)}.
\end{equation}
Then, \eqref{eq2.19} and \eqref{eq4.15} imply
\begin{align}
\int^r_0\sigma^1_\ast(l)ldl=&-\frac{\alpha\bar{\sigma}}{\alpha+R^0_\ast P_0(R^0_\ast)}\frac{1-P_0(R^0_\ast)+\alpha R^0_\ast P_0(R^0_\ast)}{\alpha+R^0_\ast P_0(R^0_\ast)}
\frac{rI_1(r)}{I_0(R^0_\ast)}R^1_\ast\nonumber
\\
=&-\frac{\alpha\bar{\sigma}}{\alpha+R^0_\ast P_0(R^0_\ast)}\frac{1-P_0(R^0_\ast)+\alpha R^0_\ast P_0(R^0_\ast)}{\alpha+R^0_\ast P_0(R^0_\ast)}
\frac{r^2P_0(r)I_0(r)}{I_0(R^0_\ast)}R^1_\ast.\label{eq4.16}
\end{align}
In addition, a direct calculation based on \eqref{eq2.20}, \eqref{eq2.21}, \eqref{eq4.23}, \eqref{a-52} and \eqref{ab-1} gives
\begin{align}
&\int^r_0\frac{\partial\sigma^0_\ast}{\partial l}(l)\frac{\partial p^0_\ast}{\partial l}(l)ldl\nonumber
\\
=&
\frac{\mu}{I_0(R^0_\ast)}\left(\frac{\alpha\bar{\sigma}}{\alpha+R^0_\ast P_0(R^0_\ast)}\right)^2
\left(P_0(R^0_\ast)[r^2I_0(r)-2rI_1(r)]
-\frac{r^2[I_1^2(r)-I_0^2(r)]+2rI_0(r)I_1(r)}{2I_0(R^0_\ast)}\right)\nonumber
\\
=&
\mu\left(\frac{\alpha\bar{\sigma}}{\alpha+R^0_\ast P_0(R^0_\ast)}\right)^2
\frac{r^2I_0(r)}{2I_0(R^0_\ast)}
\left(2P_0(R^0_\ast)[1-2P_0(r)]-\frac{I_0(r)[r^2P^2_0(r)-1+2P_0(r)]}{I_0(R^0_\ast)}\right).\label{eq4.21}
\end{align}
On the other hand, we use \eqref{a-51} and \eqref{a-52} to compute
\begin{align}
[\sigma^0_\ast(R^0_\ast)-\tilde\sigma]R^0_\ast R^1_\ast=&
\frac{\alpha\bar{\sigma}}{\alpha+R^0_\ast P_0(R^0_\ast)}[1-2P_0(R^0_\ast)]R^0_\ast R^1_\ast\nonumber
\\
=&\frac{\alpha\bar{\sigma}}{\alpha+R^0_\ast P_0(R^0_\ast)}(R^0_\ast)^3P_0(R^0_\ast)P_1(R^0_\ast)R^1_\ast,\label{eq4.30}
\end{align}
where the last equality is obtained from the fact that
\begin{equation}
\label{eq4.26}
r^2P_0(r)P_1(r)+2P_0(r)=1,
\end{equation}
ensured by \eqref{eq2.13}.
Based on \eqref{eq4.16}--\eqref{eq4.30},
\eqref{a-81} becomes
\begin{equation}
\label{eq4.22}
\frac{\mu}2\alpha\bar{\sigma}
\bigg(1-[4+(R^0_\ast)^2]P^2_0(R^0_\ast)\bigg)
=\bigg(P_0(R^0_\ast)+\alpha R^0_\ast[P_0(R^0_\ast)-P_1(R^0_\ast)]\bigg)P_0(R^0_\ast)R^1_\ast,
\end{equation}
where we have employed \eqref{eq4.26} again.

It immediately follows from \eqref{eq2.7} that the coefficient of $R^1_\ast$ on the right-hand side of \eqref{eq4.22} is positive; $R^1_\ast$ is thus explicitly solved. To further prove $R^1_\ast>0$, by \eqref{eq4.22}, it suffices to show
\begin{equation}
\label{claim-1}
P^2_0(r)<\frac1{4+r^2}\quad{\rm for~all}~r>0.
\end{equation}
In fact, one obtains from \eqref{eq2.2} and \eqref{eq2.5-1} that
$$
I_1^2(r)<I_0(r)I_2(r)+\frac2r I_1(r)I_2(r)=\left[I_0(r)+\frac2r I_1(r)\right]I_2(r),
$$
and
$$
I_2(r)=I_0(r)-\frac2r I_1(r),
$$
which, combined with the definition of $P_0(r)$, implies \eqref{claim-1}.
Finally, it is obvious from \eqref{eq4.22} that $R^1_\ast$ is monotone increasing in $\mu$. The proof is complete.
\end{proof}

\begin{remark}
\label{mainth-r7}
Compared with models without time delays, Proposition \ref{mainth-a1} indicates that the size of stationary solution with time delay is larger, and the larger the tumor aggressiveness parameter $\mu$ is, the greater impact time delays have on the size of the stationary
tumor. It is reasonable because there is more time for the tumor to grow in models with time delays.
\end{remark}

\subsection{First-order terms in $\tau$}

Recalling that $\sigma^1_\ast(r)$ and $R^1_\ast$ have been obtained in Subsection 4.3, we now continue solving the system \eqref{eq4.14}--\eqref{a-85}.
By the first equation in \eqref{a-82}, we compute
\begin{equation}
\label{eq4.25}
\frac{\partial p^1_\ast}{\partial r}(r)=-\frac{\mu}r\int^r_0\left(\frac{\partial\sigma^0_\ast}{\partial l}(l)\frac{\partial p^0_\ast}{\partial l}(l)+\sigma^1_\ast(l)\right)ldl,
\end{equation}
and then $p^1_\ast(r)$ follows from \eqref{eq4.25} and the boundary condition in \eqref{a-82}.
\eqref{eq2.1} together with \eqref{a-83} implies
\begin{equation}
\omega^1_n(r,t)=C_2(t)I_n(r).
\label{a-88}
\end{equation}
According to \eqref{a-83-1}, we need to compute
\begin{align}
\frac{\partial\omega^1_n}{\partial r}(r, t)=&
C_2(t)I_n(r)\left[rP_n(r)+\frac n{r}\right],
\label{ab-2}
\\
\frac{\partial^2\omega^0_n}{\partial r^2}(r,t)
=&\frac{-\lambda}{\alpha+h_n(R^0_\ast)}\frac{I_n(r)}{I_n(R^0_\ast)}
\left[1+\frac{n^2-n}{r^2}-P_n(r)\right]\rho^0_n(t),  \label{ab-3}
\\
\frac{\partial^3\sigma^0_\ast}{\partial r^3}(r)
=&\frac{\alpha\bar{\sigma}}{\alpha+R^0_\ast P_0(R^0_\ast)}\frac{I_0(r)}{I_0(R^0_\ast)}
\left[-\frac1r+\frac2rP_0(r)+rP_0(r)\right], \label{ab-5}
\\
\frac{\partial\sigma^1_\ast}{\partial r}(r)
=&-\frac{\lambda R^1_\ast}{\alpha+R^0_\ast P_0(R^0_\ast)}\frac{I_1(r)}{I_0(R^0_\ast)},\label{eq4.27}
\\
\frac{\partial^2\sigma^1_\ast}{\partial r^2}(r)
=&-\frac{\lambda R^1_\ast}{\alpha+R^0_\ast P_0(R^0_\ast)}\frac{I_0(r)}{I_0(R^0_\ast)}[1-P_0(r)]. \label{ab-6}
\end{align}
Using \eqref{a-83-1}, \eqref{eq4.24}, \eqref{eq4.15}, \eqref{eq4.18}, \eqref{eq4.26} and \eqref{a-88}--\eqref{ab-6}, we arrive at
\begin{equation}
\label{a-89}
\omega^1_n(r, t)=
\frac{I_n(r)}{[\alpha+h_n(R^0_\ast)]I_n(R^0_\ast)}
\left(-\lambda\rho^1_n(t)
       +\frac{\alpha\bar{\sigma}}{\alpha+R^0_\ast P_0(R^0_\ast)}H(n,\alpha,R^0_\ast)R^1_\ast\rho^0_n(t)
\right),
\end{equation}
where
\begin{align}
H(n,\alpha,R^0_\ast)=
&\frac{1-P_0(R^0_\ast)+\alpha R^0_\ast P_0(R^0_\ast)}{\alpha+h_n(R^0_\ast)}
                      \bigg(1-P_n(R^0_\ast)+\frac{n^2-n}{(R^0_\ast)^2}+\alpha h_n(R^0_\ast)\bigg)\nonumber
\\
&+\bigg(\frac{1-P_0(R^0_\ast)-(R^0_\ast)^2(P_0(R^0_\ast)-P_1(R^0_\ast))
                         -\alpha R^0_\ast(P_0(R^0_\ast)-P_1(R^0_\ast))}
                         {\alpha+R^0_\ast P_0(R^0_\ast)}\nonumber
\\
&~~~~~~-\frac{\alpha^2(1-(R^0_\ast)^2(P_0(R^0_\ast)-P_1(R^0_\ast)))}
                      {\alpha+R^0_\ast P_0(R^0_\ast)}\bigg)P_0(R^0_\ast).\label{eq4.33}
\end{align}

In the sequel, as in \cite{ZH-1}, we shall distinguish the two cases: $n\neq1$ and $n=1$, respectively. Let us first consider the case $n\neq1$. To compute $q^1_n(r,t)$, we set
\begin{equation}
\label{eq4.28}
q^1_n=-\mu\omega^1_n+u^{(1)}_n+u^{(2)}_n+u^{(3)}_n+u^{(4)}_n,
\end{equation}
where $u^{(1)}_n$, $u^{(2)}_n$, $u^{(3)}_n$ and $u^{(4)}_n$ satisfy the following equations, respectively:
\begin{equation}
\label{a-92}
    \begin{cases}
      L_nu^{(1)}_n=\mu\frac{\partial\sigma^0_\ast}{\partial r}\frac{\partial q^0_n}{\partial r}, \quad 0<r<R^0_\ast,  \\
      u^{(1)}_n(R^0_\ast,t)=0;
    \end{cases}
\end{equation}
\begin{equation}
\label{a-93}
    \begin{cases}
      L_nu^{(2)}_n=\mu\frac{\partial\omega^0_n}{\partial r}\frac{\partial p^0_\ast}{\partial r}, \quad 0<r<R^0_\ast, \\
      u^{(2)}_n(R^0_\ast, t)=0;
    \end{cases}
\end{equation}
\begin{equation}
\label{a-94}
    \begin{cases}
      L_nu^{(3)}_n=-\mu\frac{\partial\omega^0_n}{\partial t}, \quad 0<r<R^0_\ast, \\
      u^{(3)}_n(R^0_\ast, t)=0;
    \end{cases}
\end{equation}
\begin{equation}
\label{a-95}
    \begin{cases}
      L_nu^{(4)}_n=0, \quad 0<r<R^0_\ast, \\
      u^{(4)}_n(R^0_\ast, t)=\mu\omega^1_n(R^0_\ast,t)-\frac{\partial q^0_n}{\partial r}(R^0_\ast, t)R^1_\ast+\frac{n^2-1}{(R^0_\ast)^2}\rho^1_n(t)-\frac{2(n^2-1)}{(R^0_\ast)^3}R^1_\ast\rho^0_n(t).
    \end{cases}
\end{equation}
By \eqref{eq4.23}, \eqref{eq4.17}, \eqref{a-63} and \eqref{eq4.18}, we further find
\begin{align}
L_nu^{(1)}_n=&
\mu\frac{\alpha\bar{\sigma}}{\alpha+R^0_\ast P_0(R^0_\ast)}
\frac{I_0(r)P_0(r)}{I_0(R^0_\ast)}
\bigg\{
     \frac{\mu\lambda}{\alpha+h_n(R^0_\ast)}\frac{I_n(r)}{I_n(R^0_\ast)}(n+r^2P_n(r))\nonumber
\\
&~~~~~~~~~~~~~~~~~~~~~~~~~~~~~~~~
     +\bigg[\frac{n^2-1}{(R^0_\ast)^2}-\frac{\mu\lambda}{\alpha+h_n(R^0_\ast)}\bigg]
       n\left(\frac r{R^0_\ast}\right)^n
\bigg\}
\rho^0_n(t),\label{a-96}
\end{align}
and the absolute value of the right-hand side of \eqref{a-96} is not larger than $\Phi_1(n)|\rho^0_n(t)|$ for all $0\leq r\le R^0_\ast$, where $\Phi_1(n)$ is a polynomial function of $n$.
Similar estimates can be derived for $u^{(2)}_n$ and $u^{(3)}_n$. Thus, \citep[Lemma 4.6]{ZH-1} guarantees the existence and uniqueness of $u^{(k)}_n$ in $H^2(B_{R^0_\ast})$, $k=1$, $2$, $3$. In addition, solving \eqref{a-95} gives
\begin{equation}
\label{a-109}
u^{(4)}_n(r, t)=C_3(t)r^n
\end{equation}
with
\begin{equation*}
\begin{split}
C_3(t)=&\frac{R^1_\ast\rho^0_n(t)}{(R^0_\ast)^n}
\left\{\frac{\mu}{\alpha+h_n(R^0_\ast)}
\left[\frac{\alpha\bar{\sigma}}{\alpha+R^0_\ast P_0(R^0_\ast)}H(n,\alpha,R^0_\ast)
-\lambda R^0_\ast P_n(R^0_\ast)\right]
-\frac{(n+2)(n^2-1)}{(R^0_\ast)^3}\right\}
\\
&+\frac{1}{(R^0_\ast)^n}\left[\frac{n^2-1}{(R^0_\ast)^2}
-\frac{\lambda\mu}{\alpha+h_n(R^0_\ast)}\right]\rho^1_n(t),
\end{split}
\end{equation*}
where $H(n,\alpha,R^0_\ast)$ is given by \eqref{eq4.33}, hence, $q^1_n$ is established.

To analyze $\rho^1_n(t)$, according to \eqref{a-85}, we first differentiate \eqref{eq4.19} and use \eqref{eq2.4}, \eqref{eq2.14}, \eqref{eq4.26} to get
\begin{equation}
\label{eq4.29}
\frac{\partial^3p^0_\ast}{\partial r^3}(R^0_\ast)=
-\mu\frac{\alpha\bar\sigma}{\alpha+R^0_\ast P_0(R^0_\ast)}
R^0_\ast P_0(R^0_\ast)[1-P_1(R^0_\ast)].
\end{equation}
Next, \eqref{a-52} together with \eqref{ab-1} implies $(p^0_\ast)'(R^0_\ast)=0$, and thus, it follows from \eqref{a-82}, \eqref{a-81} and \eqref{eq4.25} that
$$
\frac{\partial^2p^1_\ast}{\partial r^2}(R^0_\ast)
=-\frac{\mu}{(R^0_\ast)^2}[\sigma^0_\ast(R^0_\ast)-\tilde\sigma]R^0_\ast R^1_\ast-\mu\sigma^1_\ast(R^0_\ast).
$$
Applying \eqref{eq4.15}, \eqref{a-80}, \eqref{eq4.30} and \eqref{eq4.26}, one obtains
\begin{equation}
\label{eq4.31}
\frac{\partial^2p^1_\ast}{\partial r^2}(R^0_\ast)
=\mu\frac{\alpha\bar\sigma}{\alpha+R^0_\ast P_0(R^0_\ast)}
\frac{2P_0(R^0_\ast)+(R^0_\ast)^2P_1(R^0_\ast)+\alpha R^0_\ast[1-P_1(R^0_\ast)]}{\alpha+R^0_\ast P_0(R^0_\ast)}P_0(R^0_\ast)R^1_\ast.
\end{equation}
Using \eqref{a-55}, \eqref{eq4.15}, \eqref{a-58} and \eqref{eq4.20}, we then compute
\begin{equation}
\label{eq4.32}
\frac{\partial^2q^0_n}{\partial r^2}(R^0_\ast,t)
=\left\{\frac{(n^2-1)(n^2-n)}{(R^0_\ast)^4}
+\frac{\mu\lambda}{\alpha+h_n(R^0_\ast)}(1-P_n(R^0_\ast))\right\}\rho^0_n(t).
\end{equation}
Finally, we derive from \eqref{a-89}, \eqref{eq4.28} and \eqref{a-109} that
\begin{align}
\frac{\partial q^1_n}{\partial r}(R^0_\ast, t)=&
\frac{\partial u^{(1)}_n}{\partial r}(R^0_\ast, t)+\frac{\partial u^{(2)}_n}{\partial r}(R^0_\ast, t)+\frac{\partial u^{(3)}_n}{\partial r}(R^0_\ast, t)\nonumber
\\
&+\left(\frac{n(n^2-1)}{(R^0_\ast)^3}
       +\lambda\mu\frac{R^0_\ast P_n(R^0_\ast)}{\alpha+h_n(R^0_\ast)}
\right)\rho^1_n(t)\nonumber
\\
&-\left(\frac{\mu R^0_\ast P_n(R^0_\ast)}{\alpha+h_n(R^0_\ast)}
         \left[\frac{\alpha\bar{\sigma}}{\alpha+R^0_\ast P_0(R^0_\ast)}H(n,\alpha,R^0_\ast)
               +\frac{\lambda n}{R^0_\ast}\right]
         +\frac{n(n+2)(n^2-1)}{(R^0_\ast)^4}
\right)R^1_\ast\rho^0_n(t).\label{a-110}
\end{align}
Substituting \eqref{a-60}, \eqref{eq4.29}--\eqref{a-110} into \eqref{a-85}, we get the equation for $\rho^1_n(t)$:
\begin{align}
\frac{d\rho^1_n(t)}{dt}=
&-\left[\frac{n(n^2-1)}{(R^0_\ast)^3}
       +\lambda\mu\frac{R^0_\ast P_n(R^0_\ast)}{\alpha+h_n(R^0_\ast)}
       -\mu\frac{\alpha\bar{\sigma}}{\alpha+R^0_\ast P_0(R^0_\ast)}
         (R^0_\ast)^2P_0(R^0_\ast)P_1(R^0_\ast)\right]\rho^1_n(t)\nonumber
\\
&-\frac{\partial u^{(1)}_n(R^0_\ast, t)}{\partial r}-\frac{\partial u^{(2)}_n(R^0_\ast, t)}{\partial r}-\frac{\partial u^{(3)}_n(R^0_\ast, t)}{\partial r}+\tilde{H}(\mu,\alpha,n,R^0_\ast) R^1_\ast\rho^0_n(t),\label{eq4.35}
\end{align}
where $\tilde{H}$ is a known function of $\mu$, $\alpha$, $n$, $R^0_\ast$, and satisfies
\begin{equation}
\label{eq4.37}
|\tilde{H}(\mu,\alpha,n,R^0_\ast)|\le\Phi_2(n)
\end{equation}
for some polynomial function $\Phi_2(n)$. Since the explicit expression for $\tilde{H}$ will not be used, we do not write it in detail.

We now proceed to study the asymptotic behavior of $\rho^1_n(t)$ as $t\to\infty$. Applying \citep[Lemma 4.6]{ZH-1} to the problems \eqref{a-92}--\eqref{a-94} and \eqref{a-96} again, we get
\begin{align}
\left|\frac{\partial u^{(1)}_n(R^0_\ast, t)}{\partial r}\right|
+\left|\frac{\partial u^{(2)}_n(R^0_\ast, t)}{\partial r}\right|
+\left|\frac{\partial u^{(3)}_n(R^0_\ast, t)}{\partial r}\right|
\le&\Phi_3(n)\left[|\rho^0_n(t)|+\left|\frac{d\rho^0_n(t)}{dt}\right|\right]\nonumber
\\
\le&\Phi_4(n)|\rho^0_n(t)|\label{eq4.36}
\end{align}
for all $t>0$, where $\Phi_3(n)$, $\Phi_4(n)$ are polynomial functions of $n$, and the last inequality is obtained from \eqref{eq4.10}--\eqref{eq4.2}. Thus, it follows from \eqref{eq4.22} and \eqref{eq4.35}--\eqref{eq4.36} that
\begin{align}
&\left|\frac{d\rho^1_n(t)}{dt}+
\left[\frac{n(n^2-1)}{(R^0_\ast)^3}
       +\lambda\mu\frac{R^0_\ast P_n(R^0_\ast)}{\alpha+h_n(R^0_\ast)}
       -\mu\frac{\alpha\bar{\sigma}}{\alpha+R^0_\ast P_0(R^0_\ast)}
         (R^0_\ast)^2P_0(R^0_\ast)P_1(R^0_\ast)\right]\rho^1_n(t)\right|\nonumber
\\
\le&\Phi_5(n)|\rho^0_n(t)|.\label{eq4.40}
\end{align}
Recall that $\rho^0_n(t)$ exhibits different behaviors under $n\geq2$ and $n=0$. If $n\geq2$,
then by \eqref{eq4.15}, \eqref{h-46} and \eqref{eq4.26}, we can rewrite \eqref{eq4.40} as
\begin{align*}
\left|\frac{d\rho^1_n(t)}{dt}+
\left(1-\frac{\mu}{\mu^0_n}\right)\frac{n(n^2-1)}{(R^0_\ast)^3}
\rho^1_n(t)\right|\le\Phi_5(n)|\rho^0_n(t)|.
\end{align*}
Proposition \ref{l5} asserts that when $0<\mu<\mu_\ast$,
\begin{equation*}
%\label{a-111}
\left|\frac{d\rho^1_n(t)}{dt}+
\left(1-\frac{\mu}{\mu^0_n}\right)\frac{n(n^2-1)}{(R^0_\ast)^3}
\rho^1_n(t)\right|\le Ce^{-(\delta_2/2) n^3t}\quad{\rm for}~t\geq t_0,
\end{equation*}
where $t_0$ is a small positive constant, $C$ is independent of $n$ and $\delta_2$ is defined by \eqref{delta}.
Since \eqref{eq4.38}, \eqref{delta} and the definition of $\mu_\ast$ implies that for $0<\mu<\mu_\ast$,
$$
\left(1-\frac{\mu}{\mu^0_n}\right)\frac{n(n^2-1)}{(R^0_\ast)^3}
\ge\delta_2 n^3>\frac{\delta_2}2 n^3,
$$
using \citep[Lemma 4.7]{ZH-1} we derive $|\rho^1_n(t)|\leq Ce^{-(\delta_2/2)n^3t}$.
While if $n=0$, then \eqref{eq4.40} becomes
\begin{equation*}
\left|\frac{d\rho_0^1(t)}{dt}
-\mu B_0\rho_0^1(t)\right|\leq C|\rho^0_0(t)|.
\end{equation*}
By \eqref{a-66}, we further have for any $\mu>0$ and any $t>0$,
\begin{equation*}
%%\label{a-114}
\left|\frac{d\rho_0^1(t)}{dt}
-\mu B_0\rho_0^1(t)\right|\leq Ce^{-(-\mu B_0)t}\leq Ce^{-(-\mu B_0/2)t}.
\end{equation*}
Applying \citep[Lemma 4.7]{ZH-1} again yields $|\rho_{0}^{1}(t)|\leq Ce^{-(-\mu B_0/2)t}$ for $t>0$.

We next turn to the case $n=1$, where $\rho^0_1(t)\equiv \rho^0_1(0)$. The previous method for establishing $q^1_n(r,t)$ might not work here. Surprisingly, as we shall see, $q^1_1(r,t)$ can be solved explicitly. As a matter of fact, noticing that $\partial_t\omega^0_1$=0,
after a straightforward but lengthy calculation, we get
\begin{equation}
\label{a-119}
L_1\left(q^1_1+\mu\omega^1_1\right)
=\left(\frac{\mu\alpha\bar\sigma}{\alpha+R^0_\ast P_0(R^0_\ast)}\right)^2
\frac{R^0_\ast P_0(R^0_\ast)}{I_0(R^0_\ast)}\rho^0_1(t)
\left(\frac{2I_1(r)}{I_1(R^0_\ast)}\left[I_0(r)-\frac{I_1(r)}{r}\right]-\frac1{R^0_\ast}rI_0(r)\right)
\end{equation}
with the boundary condition
\begin{equation}
\label{a-120}
\left(q^1_1+\mu\omega^1_1\right)(R^0_\ast, t)
=\frac{\mu\alpha\bar\sigma R^0_\ast P_0(R^0_\ast)}{\alpha+R^0_\ast P_0(R^0_\ast)}
\left[-\rho^1_1(t)
+R^1_\ast\rho^0_1(t)\frac{P_0(R^0_\ast)+\alpha R^0_\ast(P_0(R^0_\ast)-P_1(R^0_\ast))}{\alpha+R^0_\ast P_0(R^0_\ast)}\right],
\end{equation}
and
\begin{equation}
\label{eq4.41}
\omega^1_1(r,t)=\frac{\alpha\bar\sigma}{\alpha+R^0_\ast P_0(R^0_\ast)}
\frac{I_1(r)}{I_0(R^0_\ast)}\left[-\rho^1_1(t)
+R^1_\ast\rho^0_1(t)\frac{1-P_0(R^0_\ast)+\alpha R^0_\ast P_0(R^0_\ast)}{\alpha+R^0_\ast P_0(R^0_\ast)}\right].
\end{equation}
Using \eqref{eq2.22} and \eqref{eq2.23}, one derives from \eqref{a-119}
that
\begin{align}
&q^1_1(r, t)\nonumber
\\
=&-\mu\omega^1_1(r,t)+C_4(t)r\nonumber
\\
&
+\left(\frac{\mu\alpha\bar\sigma}{\alpha+R^0_\ast P_0(R^0_\ast)}\right)^2
\frac{R^0_\ast P_0(R^0_\ast)}{I_0(R^0_\ast)}\frac{\rho^0_1(t)}2\bigg(\frac{-I^2_1(r)+I_0(r)I_2(r)}{I_1(R^0_\ast)}
-\frac{1-2I_2(r)}{R^0_\ast}\bigg)r.\label{a-129}
\end{align}
Substituting \eqref{a-129} into \eqref{a-120} and applying \eqref{eq4.26}, \eqref{eq4.22}, we obtain
\begin{equation}
\label{a-130}
C_4(t)=
\frac{\mu\alpha\bar\sigma}{\alpha+R^0_\ast P_0(R^0_\ast)}P_0(R^0_\ast)
\left[-\rho^1_1(t)
+\frac{\mu\alpha\bar\sigma}{\alpha+R^0_\ast P_0(R^0_\ast)}\frac1{2I_0(R^0_\ast)}\rho^0_1(t)\right].
\end{equation}

It remains to solve for $\rho^1_1(t)$. For this, according to \eqref{a-85}, we need use \eqref{eq4.22} and \eqref{eq4.41}--\eqref{a-130} to compute
\begin{align}
\frac{\partial q^1_1}{\partial r}(R^0_\ast, t)=&
\frac{\mu\alpha\bar\sigma}{\alpha+R^0_\ast P_0(R^0_\ast)}
\bigg\{(1-2P_0(R^0_\ast))\rho^1_1(t)\nonumber
\\
&~~~~~~~
-\frac{R^1_\ast\rho^0_1(t)P_0(R^0_\ast)}{\alpha+R^0_\ast P_0(R^0_\ast)}
[(R^0_\ast)^2P_1(R^0_\ast)+2P_0(R^0_\ast)+\alpha R^0_\ast(1-P_1(R^0_\ast))]\bigg\}.\label{a-131}
\end{align}
Then substituting \eqref{a-60}, \eqref{eq4.29}--\eqref{eq4.32}, \eqref{a-131} into \eqref{a-85} and by the fact that
$$
\frac{\lambda}{\alpha+h_1(R^0_\ast)}=\frac{\alpha\bar\sigma}{\alpha+R^0_\ast P_0(R^0_\ast)}R^0_\ast P_0(R^0_\ast),
$$
we immediately arrive at
\begin{align*}
\frac{d\rho^1_1(t)}{dt}=0,
\end{align*}
which implies that $\rho_1^1(t)\equiv\rho_1^1(0)$.

\begin{remark}
\label{mainth-r8}
By analyzing the first-order terms in $\tau$, we conclude that after ignoring $O(\tau^2)$ terms,
introducing the time delay into the system would not affect the stability.
\end{remark}

\section*{Acknowledgments}
This work was partly supported by the National Natural Science Foundation of China (No. 11861038 and No. 11771156).

\end{document}